\documentclass[12pt]{article}

\usepackage{amssymb}
\usepackage{lineno}
\usepackage{color}
\usepackage{graphicx}
\usepackage{amsmath}
\usepackage{enumerate}

\begin{document}
\newtheorem{Theorem}{Theorem}[section]
\newtheorem{Proposition}[Theorem]{Proposition}
\newtheorem{Lemma}[Theorem]{Lemma}
\newtheorem{Example}[Theorem]{Example}
\newtheorem{Corollary}[Theorem]{Corollary}
\newtheorem{Fact}[Theorem]{Fact}
\newtheorem{Conjecture}[Theorem]{Conjecture}
\newenvironment{Definition} {\refstepcounter{Theorem} \medskip\noindent
 {\bf Definition \arabic{section}.\arabic{Theorem}}\ }{\hfill}
\newenvironment{Remarks} {\refstepcounter{Theorem}
\medskip\noindent {\bf Remarks
\arabic{section}.\arabic{Theorem}}\ }{\hfill}

\newenvironment{Question} {\medskip\refstepcounter{Theorem}
     \noindent {\bf Question
\arabic{section}.\arabic{Theorem}}\ } {\hfill}

 \newcommand{\qed}{\hfill \ensuremath{\Box}}
\newenvironment{Proof}{{\noindent \bf Proof\ }}{\hfill\qed}

\newenvironment{claim} {{\smallskip\noindent \bf Claim\ }}{\hfill}

\def \blue {\color{blue}}
\def\red{\color{red}}
\def \L {{\cal L}}

\def \B {{\cal B}}
\def \mod {{\rm mod \ }}
\def \iso {\cong}
\def \Lor  {\L_{\rm or}}
\def \Lr {\L_{\rm r}}
\def \Lg {\L_{\rm g}}
\def \I {{\cal I}}
\def \M {{\cal M}}\def\N {{\cal N}}
\def \E {{\cal E}}
\def \Proj {{\mathbb P}}
\def \H {{\cal H}}
\def \x {\times}
\def \Stab {{\rm Stab}}
\def \Z {{\mathbb Z}}
\def \V {{\mathbb V}}
\def \C {{\mathbb C}} \def \Cexp {\C_{\rm exp}}
\def \R {{\mathbb R}}
\def \Q {{\mathbb Q}}\def \K {{\mathbb K}}
\def \F {{\cal F}}\def \A {{\mathbb A}}
 \def \X {{\mathbb X}}
\def \G {{\mathbb G}}
\def\HH {{\mathbb H}}
\def \Nn {{\mathbb N}}\def \Nn {{\mathbb N}}
\def\D {{\mathbb D}}
\def \hat {\widehat}
\def \bar{\overline}
\def \Spec {{\rm Spec}}
\def \bul {$\bullet$\ }
\def\proves {\vdash}
\def \Co {{\cal C}}
\def \ACFp {{\rm ACF}_p}
\def \ACF0 {{\rm ACF}_0}
\def \ee {\prec}
\def \Diag {{\rm Diag}}
\def \Diage {{\rm Diag}_{\rm el}}
\def \DLO {{\rm DLO}}
\def \d {{\rm depth}}
 \def \dist {{\rm dist}}
\def \P {{\cal P}}
\def \ds {\displaystyle}
\def \Fp {{\mathbb F}_p}
\def \acl {{\rm acl}}
\def \dcl {{\rm dcl}}

\def \dom {{\rm dom}}
\def \tp {{\rm tp}}
\def \stp {{\rm stp}}
\def \Th  {{\rm Th}}
\def\< {\Lngle}
\def \> {\rangle}
\def \n {\noindent}
\def \minusdot{\hbox{\ {$-$} \kern -.86em\raise .2em \hbox{$\cdot \
$}}}
\def\exp {{\rm exp}}\def\ex {{\rm ex}}
\def \td {{\rm td}\ }
\def \ld {{\rm ld}}
\def \span {{\rm span}}
\def \tilde {\widetilde}
\def \d {\partial}
\def \del {\partial}
\def \cl {{\rm cl}}
\def \acl {{\rm acl}}
\def \cN {{\cal N}}
\def \Qalg {{\Q^{\rm alg}}}
\def \th {^{\rm th}}
\def \deg { {\rm deg} }
\def\hat {\widehat}
\def\li {\L_{\infty,\omega}}
\def\lo {\L_{\omega_1,\omega}}
\def\lk {\L_{\kappa,\omega}}
\def \ee {\prec}\def \bSigma {{\mathbf\Sigma}}
\def \mod {\ {\rm mod\ }}
\def \Tor {{\rm Tor}}
\def \td {{\rm td}}
\def \dim {{\rm dim\ }}
 \def \| {\kern -.3em \restriction \kern -.3em}
 \def \lc{\lceil}
 \def \rc{\rceil}
 \def \SR {{\rm SR}}
 \def\L {{\cal L}}
 \def \a{{\bf a}}
 \def \b{{\bf b}}
 \def\x{{\bf x}}
 \def \< { \langle}
 \def \> { \rangle}
 \def \K {{\cal K}}
  \def \MM {{\mathbb M}} 
  \def \rcl {{\rm rcl}}
  \def\acl{{\rm acl}}
  \def\img{{\rm img}}
  \def \bfalpha {\mbox{\boldmath$\alpha$}}
  
   \title{Finding suitably generic points on curves with an application to the construction of rigid real closed fields}
 \author{Dragos Ghioca\\ University of British Columbia \and David Marker\\ University of Illinois Chicago\and Charles Steinhorn\\ Vassar College}
 \date{}
\maketitle
  
  \begin{abstract} Let $K$ be an algebraically closed field of characteristic 0 and transcendence degree at least 2.  Let $C\subset K^2$ be an irreducible curve defined over $K$ but not defined over the algebraic closure of $ \Q$.  There is $(x ,y)$ a $K$-point of $C$ such that $x$ and $y$ are algebraically independent.  Moreover, if $C_0$ and $C_1$ are two such curves and there is a finite-to-finite algebraic correspondence between them defined over $K$, then there are corresponding $K$-points $(x_0,y_0)\in C_0$ and $(x_1,y_1)\in C_1$ such that
  $x_0$ and $y_0$ are algebraically independent and $x_1$ and $y_1$ are algebraically independent.  We use the latter result to construct non-Archimedean real closed fields of transcendence degree $\kappa$ with no non-trivial automorphisms for all $2\le\kappa\le \aleph_1$.
  \end{abstract} 
\section{Introduction}
 
Suppose $C\subset\C^n$ is an irreducible curve that is not contained in a hypersurface defined over $\bar Q$, the algebraic closure  of $\Q$.  If $(x_1,\dots,x_n)$ is a generic point of $C$, then $x_1,\dots,x_n$ are algebraically independent. Indeed $$\{(x_1,\dots,x_n)\in C: x_1,\dots, x_n\hbox{ are algebraically dependent}\}.$$ is a countable subset of $C$. More generally, if $k$ is the field of definition of $C$, $K\supset k$ is algebraically closed and the transcendence degree of $K$ over $k$ is at least $n$ we can find a $K$-point $(x_1,\dots,x_n)\in C$ where $x_1,\dots,x_n$ are algebraically independent.  But what happens when the transcendence degree of $K/k$ is small? For example, what happens if $K$ is the algebraic closure of $k$? The concept of heights have been effective in addressing problems in arithmetic geometry---see the introduction to \cite{cgmm}, e.g.---and we employ heights here to address these questions. 

We first consider the case where $K$ is an algebraically closed field of characteristic 0 whose transcendence degree is at least $2$ and $C\subset K^2$ is an irreducible curve defined over $K$ that is not contained in a curve defined over $\bar \Q$.  In Proposition \ref{basic}, we show that $C$ always contains a $K$-point $(x,y)$ with $x$ and $y$ algebraically independent. 
Indeed, the height machinery allows us to make precise the intuition that ``most" $K$-points of $C$ have algebraically independent coordinates.
More generally, suppose $C_0$ and $C_1$ are two such curves in  $K^2$ for which there is a finite-to-finite correspondence between them defined over $K$.  It follows from Theorem \ref{main} that there are corresponding $K$-points $(x_0,y_0)\in C_0$ and $(x_1,y_1)\in C_1$ such that both pairs $(x_0, y_0)$ and $(x_1, y_1)$ are algebraically independent. 

We prove Theorem \ref{main} by applying a result of Ghioca, Masser and Zannier \cite{gmz} on bounding heights. There they prove Conjecture~1.6 of \cite{cgmm} for all plane curves defined over a field of characteristic~0, a special case of the Bounded Height Conjecture~1.8  formulated in \cite{cgmm}.  Proposition \ref{basic} has more elementary proofs--and is perhaps well known--but we provide a proof using \cite{gmz} to illustrate the method first in a simpler case.
The second and third authors conjectured Theorem~\ref {main} to complete a construction of rigid non-Archimedean real closed fields--see \S \ref{rigid}. They were able to prove Proposition \ref{basic} and some cases of Theorem \ref{main}, but needed the help of the first author and the tools of \cite{gmz} to complete the proof. 

The paper is organized as follows. In \S\ref{heights} we review some properties of Weil heights that we need to prove Theorem \ref{main}, including Theorem~1.4 of Ghioca, Masser and Zannier \cite{gmz}. The proofs of Proposition \ref{basic} and Theorem \ref{main} are contained in \S \ref{MT}. The context changes in \S\ref{real} to curves defined over real closed fields. Finally, in \S \ref{rigid} we apply Theorem \ref{main} to give a new construction of rigid non-Archimedean real closed fields of transcendence degree $\kappa$ for $\kappa\le \aleph_1.$ Throughout this paper, the algebraic closure of a field $K$ will be denoted by $\bar K$,  the transcendence degree of $K$ will be denoted by $\td(K)$, and if 
$k\subset K$, $\td(K/k)$ will represent the transcendence degree of $K/k$.  

\medskip
We  thank Jim Freitag and Tom Scanlon for helpful conversations.

 \section{Preliminaries on heights}\label{heights}
  
 We follow the treatment of Weil heights from \cite{gmz}.
Let $k\subset K$ be algebraically closed fields where $1\le \td(K/k)<\infty$.  Let $\bfalpha$ be a transcendence base for $K/k$.
Let $x\in K\setminus k(\bfalpha)$.  Let $d$ be the degree of $k(\bfalpha,x)/k(\bfalpha)$.  Let $p_0,\dots,p_d$ be relatively prime polynomials over $k$ such that $$\sum p_i(\bfalpha) x^i=0.$$ The Weil height of $x$ is given by
$$h(x)= {\max_i {\rm deg}(p_i)\over d}.$$
If $(x_1,\dots,x_n)\in K^n$ we define $h((x_1,\dots,x_n))=\sum h(x_i)$.

The following theorem will be our main tool.
  
  \begin{Theorem} [Ghioca--Masser--Zannier \cite{gmz}]\label{GMZ} Let $k\subset K$ be algebraically closed fields of characteristic $0$ with $2\le \td(K/k)<\infty$. Let $C\subset K^2$ be an irreducible algebraic curve defined over $K$ which is not defined over a subfield of transcendence degree at most $1$.  Let $X\subset K^2$ be all points of $K^2$ contained in a curve defined over $k$. Then there is $B$ such that all points of $X\cap C$ have height at most $B$.
  \end{Theorem}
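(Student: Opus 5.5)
Since Theorem \ref{GMZ} is quoted from \cite{gmz}, what follows is a sketch of how I would prove it rather than a reconstruction of their argument. I have not checked that the wronskian step below closes the argument.

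\emph{Reformulation.} Fix a polynomial $f(X,Y)=\sum_{j} c_j X^{a_j}Y^{b_j}$ defining $C$, normalized so that some coefficient equals $1$. Let $F=k(\{c_j\})$, so $\td(F/k)\ge 2$ by hypothesis. A point $P=(x,y)\in X\cap C$ is exactly a point of $C$ for which $E_P=\overline{k(x,y)}$ has $\td(E_P/k)\le 1$. The defining equation becomes the linear relation $\sum_j c_j m_j(P)=0$, where the $m_j(P)=x^{a_j}y^{b_j}$ lie in the small field $E_P$.

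\emph{Derivations and a wronskian.} I would use derivations to turn this single relation into a system. Choose a transcendence base $\bfalpha=(\alpha_1,\dots,\alpha_n)$ of $K/k$, with $n\ge 2$, and let $\partial_1,\dots,\partial_n$ be the corresponding $k$-derivations of $K$. Because $\td(E_P/k)\le 1<n$, there is a nonzero derivation $D_P=\sum_i u_i\partial_i$ vanishing on $E_P$. Concretely, $(u_1,\dots,u_n)$ can be taken to be a solution of $\sum_i u_i\partial_i x=\sum_i u_i\partial_i y=0$ (or just $D_P(t)=0$ for a single generator $t$ of $E_P$).

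Applying $D_P$ repeatedly gives $\sum_j D_P^{\,r}(c_j)\,m_j(P)=0$ for all $r$. Using several independent such derivations when $n>2$, this says the vector $(m_j(P))_j$ is orthogonal to the span of $\{D_P^r c\}$. The hypothesis that $C$ is not defined over a field of transcendence degree $\le 1$ should force a wronskian-type determinant built from the $D_P^r c_j$ to be nonvanishing. I would prove this by contradiction: if it vanished identically for all admissible $u$, the coefficient vector $c$ would, up to scaling, be constant along a derivation. Its projective class would then be defined over a field of transcendence degree $\le 1$, so $C$ would be too. Once the determinant is nonzero, Cramer's rule expresses the ratios $m_j(P)/m_{j_0}(P)$, hence $x$ and $y$, as rational expressions in the $c_j$, their $\partial$-derivatives, and the $u_i$.

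\emph{Heights.} Finally I would bound heights using the standard inequalities $h(a+b),\,h(ab)\le h(a)+h(b)$ and $h(\partial_i a)\le c\,h(a)$, the latter valid because $\partial_i$ does not raise degree in $\bfalpha$ beyond a constant factor. These give $h(P)\le A+A'\,h(u)$, with $A,A'$ depending only on $C$.

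\emph{Main obstacle.} The hard step is bounding $h(u)$ independently of $P$, since $u$ is built from $\partial_i x$ and $\partial_i y$ and so a priori has height comparable to $h(P)$. The argument is then circular, and the naive inequality has the wrong form. My first attempt would be to exploit homogeneity: $D_P$ is only determined up to a scalar, so one can normalize $u$ to be a point of $\Proj^{n-1}$. The aim is to show that the $\partial_i$-jet of $P$ grows sublinearly relative to $h(P)$, using that $P$ lies on the fixed curve $C$ and that $x,y$ are dependent over $k$. Concretely, from $f(x,y)=0$ and $g(x,y)=0$ for some $g$ over $k$, implicit differentiation expresses $\partial_i y/\partial_i x$ through $f_X/f_Y$ and $g_X/g_Y$, which yields $u$ with height at most $\epsilon\,h(P)+O(1)$ for some $\epsilon<1$.

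If that fails, I would reduce to the case $\td(K/k)=2$ by specializing the extra transcendentals while keeping $C$ non-special. In that case the problem becomes one about a curve over a function field of a surface, and a Bogomolov/Zhang-type gap argument, as in \cite{cgmm}, bounds the height.
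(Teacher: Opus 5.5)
The paper does not prove this statement. It imports it from Ghioca--Masser--Zannier and uses it as a black box, so your sketch has to stand on its own. It does not close: the gap is exactly the step you flag, and neither proposed way around it works.

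\emph{The implicit-differentiation fix is tautological.} Take $n=2$ and write $f^{\partial_i}$ for $f$ with $\partial_i$ applied to its coefficients. If $f(x,y)=0=g(x,y)$ with $g$ over $k$, then $g_X\partial_ix+g_Y\partial_iy=0$ and $f^{\partial_i}(P)+f_X\partial_ix+f_Y\partial_iy=0$. These give $\partial_ix=-f^{\partial_i}(P)g_Y(P)/(f_Xg_Y-f_Yg_X)(P)$, hence $(u_1:u_2)=(\partial_2x:-\partial_1x)=(f^{\partial_2}(P):-f^{\partial_1}(P))$, independently of $g$. So $h(u_2/u_1)$ is only $O(h(P))$, with no gain. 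Worse, substituting this $u$ into $\sum_jD_P(c_j)m_j(P)=u_1f^{\partial_1}(P)+u_2f^{\partial_2}(P)$ gives $0$ identically on $C$.

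For $C\colon y=\alpha_1x+\alpha_2$ you get $(u_1:u_2)=(-1:x)$, so $h(u_2/u_1)=h(x)$, and Cramer's rule returns $x=-u_2/u_1=x$. This is unavoidable. For every $d$, $X\cap C$ contains the point with $x_d^d=\alpha_1x_d+\alpha_2$, $y_d=x_d^d$, which has degree $d$ over $k(\alpha_1,\alpha_2)$ and $h(x_d)=1/d$. So $X\cap C$ is Zariski dense in $C$, and $u$ is an algebraic function of $P$ on $C$. Any identity you extract from $D_P$ therefore collapses to a consequence of $f(P)=0$. The bound needs an independent, global control of the degree of the transcendence-degree-one field $E_P$; that is the real content of the cited theorem, and it is absent from your sketch.

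The fallback does not supply it either. ``A Bogomolov/Zhang-type gap argument'' names a method without giving one. Even the reduction to $\td(K/k)=2$ is delicate, because specializing $\alpha_3,\dots,\alpha_n$ can destroy height: $\alpha_3^m$ has height $m$ and specializes to a constant.

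\emph{Two auxiliary claims are also false as stated.}

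First, $h(\partial_ia)\le c\,h(a)$ holds for $a\in k(\alpha_1,\dots,\alpha_n)$, with $c=2$, but fails for algebraic $a$ of growing degree. Take $a=(1-\alpha_1^e)^{1/e}$, so $h(a)=1$. Then $\partial_1a=-\alpha_1^{e-1}a^{1-e}$ has minimal polynomial $(1-\alpha_1^e)^{e-1}X^e-(-1)^e\alpha_1^{e(e-1)}$, so $h(\partial_1a)=e-1$. For $r\ge2$, $D_P^rc_j$ involves derivatives of $u$, which is algebraic of unbounded degree. So $h(P)\le A+A'h(u)$ would not follow even if $h(u)$ were bounded.

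Second, the non-vanishing you need is at the specific $u_P$, not for generic admissible $u$, and it can fail at every relevant $P$ even when $C$ is not defined over a field of transcendence degree $\le1$. Take $C\colon y^2+\alpha_1(y+1)+\alpha_2(x+1)=0$ with monomials $(Y^2,Y,X,1)$. The vector $w=(0,1,1,-1)\in k^4$ satisfies $\langle w,D^rc\rangle=D^r\langle w,c\rangle=0$ for every derivation $D$. Now take any $P\in X\cap C$ with $\td(E_P/k)=1$, for instance $x=y$ with $y^2+(\alpha_1+\alpha_2)(y+1)=0$. The annihilator of $\{D_P^rc\}$ then contains the independent vectors $w$ and $m(P)$, so Cramer's rule cannot isolate $m(P)$. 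Your contradiction argument also conflates two conditions. Vanishing of a full wronskian means the $c_j$ are linearly dependent over the constants of $D$; it does not mean $[c]$ is $D$-constant, which is the vanishing of the $2\times2$ minors of $(c,Dc)$.
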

  
  We also need the following Lemma which follows, for example, from Lemma 3.3 (b) of \cite{gn}.

 \begin{Lemma}\label{compare} Let $k\subset K$ be algebraically closed fields with $1\le \td(K/k)<\infty$.  Let $C\subset 
 K^n$ be an irreducible curve  defined over $K$ that projects dominantly onto each coordinate.  For any $1\le i,j\le n$ there are constants $A$ and $B$
 such that if $(x_1,\dots x_n)\in C$ then $$h(x_i)<A h(x_j)+B.$$ 
 \end{Lemma}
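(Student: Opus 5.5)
The plan is to reduce to a plane curve and then use the elementary functoriality properties of the Weil height over the function field $k(\bfalpha)$. Since $\td(K/k)<\infty$ and $\bfalpha$ is a transcendence base, every element of $K$ is algebraic over $k(\bfalpha)$. So $h$ is the usual (logarithmic, non-archimedean) Weil height on $\overline{k(\bfalpha)}$ attached to the places of $k(\bfalpha)$ trivial on $k$, normalized by $1/d$. I will read the definition as also covering $x\in k(\bfalpha)$, with $d=1$. I will use the standard facts for this height, with no archimedean error terms:
\begin{itemize}
\item $h(a+b)\le h(a)+h(b)$ and $h(ab)\le h(a)+h(b)$;
\item $h(a^{-1})=h(a)$ and $h(a^m)=|m|h(a)$;
\item if $x$ is a root of $\sum_{m=0}^{e} a_m X^m$ with $a_m\in K$ not all zero, then $h(x)\le \sum_m h(a_m)$, or more precisely $h(x)\le h([a_0:\dots:a_e])$.
\end{itemize}
I would either quote these from \cite{gmz} or \cite{gn}, or verify them valuation by valuation using Gauss's lemma.

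\textbf{Step 1 (reduction to two coordinates).} Fix $i\ne j$; the case $i=j$ is trivial with $A=2$, $B=0$. Let $C_{ij}$ be the Zariski closure of the image of $C$ under $(x_1,\dots,x_n)\mapsto(x_i,x_j)$. Because $C$ is an irreducible curve projecting dominantly onto each coordinate, $C_{ij}$ is an irreducible plane curve that is neither vertical nor horizontal. Hence $C_{ij}=\{F=0\}$ for an irreducible $F\in K[X,Y]$ with positive degree in both $X$ and $Y$. It suffices to show $h(x)\le A\,h(y)+B$ for all $(x,y)\in K^2$ with $F(x,y)=0$.

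\textbf{Step 2 (height bound via the root inequality).} Write $F(X,Y)=\sum_{m=0}^{e} q_m(Y)X^m$ with $q_m\in K[Y]$ and $e=\deg_X F\ge1$. Since $F$ is irreducible and not a polynomial in $Y$ alone, the $q_m$ have no common factor. So the set $E=\{y: q_0(y)=\dots=q_e(y)=0\}$ is finite.

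For $y\notin E$, the root bound gives
$$h(x)\le \sum_m h(q_m(y)).$$
Expanding $q_m(y)=\sum_\ell c_{m\ell}y^\ell$ and applying subadditivity and multiplicativity gives
$$h(q_m(y))\le \sum_\ell h(c_{m\ell})+\deg(q_m)\,h(y).$$
Summing over $m$ yields $h(x)\le A\,h(y)+B_0$, where $A=\sum_m\deg q_m$ and $B_0=\sum_{m,\ell}h(c_{m\ell})$ depends only on $F$.

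For the finitely many $y\in E$, the fibre $\{x: F(x,y)=0\}$ is finite, since $F(X,y)\ne0$ for all but finitely many $y$ and otherwise the fibre is a finite set of points of $C_{ij}$. I would absorb the heights of these finitely many $x$ into the constant: take $B=\max(B_0,\max h(x))+1$ to get the strict inequality.

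The main obstacle is justifying the height inequalities, above all the root bound $h(x)\le\sum h(a_m)$, for the specific normalization given in the paper (the degree of a primitive minimal relation divided by $d$). I would prove it by checking that this $h$ coincides with $\frac1{[L:k(\bfalpha)]}\sum_{v}\max(0,-v(x))$ summed over places of a finite extension $L$. The bound then follows from the ultrametric inequality at each place, because a root $x$ of $\sum a_mX^m$ satisfies
$$-v(x)\le \max_m\bigl(-v(a_m)\bigr)-\min_m\bigl(-v(a_m)\bigr)$$
at each place $v$. Once this comparison is in hand, the rest is routine bookkeeping.
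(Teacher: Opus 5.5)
Your proof is correct in substance, but it takes a different route from the paper, which gives no argument of its own and simply quotes Lemma 3.3(b) of \cite{gn}. You derive the statement directly. You project to the plane curve $F(x_i,x_j)=0$, regard $x_i$ as a root of $F(X,x_j)=\sum_m q_m(x_j)X^m$, and combine the root bound $h(x)\le h([a_0:\dots:a_e])\le\sum_m h(a_m)$ with an ultrametric estimate for $h(q_m(y))$. This gives a self-contained proof that uses only Gauss's lemma and the product formula for the function-field height, with explicit constants: $A=\sum_m\deg q_m$, and $B$ depending only on the coefficients of $F$. The citation is shorter but rests entirely on the height machinery of \cite{gn}. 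Your non-sharp linear constant is all the paper ever needs, since it only uses that large $h(x_i)$ forces large $h(x_j)$.

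A few details need repair.
\begin{enumerate}
\item The set $E$ is empty, not merely finite. The $q_m$ are coprime in $K[Y]$ and $K$ is algebraically closed, so a common zero $y_0$ would give the common factor $Y-y_0$. Your treatment of $y\in E$ is wrong as written. For such $y$ one has $F(X,y)\equiv 0$, so the fibre is all of $K$ and no constant could absorb it. The point is that this case never occurs.
\item For $i=j$, the choice $A=2$, $B=0$ fails the strict inequality when $h(x_i)=0$ (e.g.\ $x_i\in k$). Take $B=1$.
\item Your per-place inequality with $\min_m(-v(a_m))$ does not sum to $h(x)\le h([a_0:\dots:a_e])$. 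Compare instead with the leading nonzero coefficient $a_{e'}$, i.e.\ use $\max(0,-v(x))\le\max_m(-v(a_m))+v(a_{e'})$, and remove $\sum_v v(a_{e'})$ by the product formula. Your version still yields $h(x)\le 2\sum_m h(a_m)$, which would suffice.
\item Likewise, getting the coefficient $\deg q_m$ in front of $h(y)$ requires the ultrametric inequality place by place. Naive subadditivity only gives $\sum_\ell \ell\, h(y)$, which is harmless.
\item When $m=\td(K/k)\ge 2$, the places realizing the paper's height are not all places of $k(\bfalpha)$ trivial on $k$. They are the valuations attached to irreducible hypersurfaces of $\mathbb{P}^m_k$, including the hyperplane at infinity, each weighted by its degree. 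With these, coprimality of the $p_i$ together with total degree at infinity gives $h([p_0:\dots:p_d])=\max_i\deg p_i$. Gauss's lemma then gives $h(x)=\frac1d h([p_0:\dots:p_d])$, as you anticipated.
\end{enumerate}
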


 In particular, for any $B_0$ we can find a $B$ such that if $(x_1,\dots,x_n)\in C$ and $x_i$ has height at least $B$, then $x_j$ has height at least $B_0$.
 
\medskip In \S \ref{real} we will need the following basic facts about heights in real closed fields.
 
\begin{Lemma} \label{real heights} Let $k\subset K$ be real closed fields with $1\le \td(K/k)<\infty.$ Let $\bfalpha$ be a transcendence base for $K/k$ and consider the corresponding Weil height.  If $a,b\in K$ with $a<b$, the interval $(a,b)$
  contains points of arbitrarily large height.  In particular, there is $c\in (a,b)$ transcendental over $k$.
\end{Lemma}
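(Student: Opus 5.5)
The plan is to pass to the finitely generated field $k(\bfalpha)$ and exploit the real embedding. Let $F=k(\bfalpha)$. Since $K$ is real closed and $\td(K/k)<\infty$ with $k$ real closed, $K$ is algebraic over $F$. Moreover, $K$ is the real closure of $F$ with the ordering induced from $K$, because $K$ is real closed and algebraic over $F$. Heights of elements of $K$ are then computed as in the algebraically closed setting, using minimal polynomials over $F$.

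The key observation is that elements of bounded height are, in a suitable sense, sparse. The first step is to reduce to a single transcendental. After an affine change $x\mapsto (x-a)/(b-a)$ we only change heights by a bounded amount, by Lemma \ref{compare} applied to the graph of this linear map (or directly, since the height is additive up to constants under linear fractional maps with coefficients in $K$). So it suffices to work in a fixed interval. The one subtlety is that $a,b\in K$ rather than in $k$, and heights of $a,b$ are fixed numbers, which is harmless.

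The second step is the construction of candidate points. Let $\alpha=\alpha_1$, which is transcendental over $k$. Since $(a,b)$ is a nonempty open interval of the real closed field $K$, and $F$ is dense in its real closure only in the archimedean-free sense, I instead use polynomials in a small element. Concretely, I pick $\epsilon\in K$ with $0<\epsilon<b-a$ and set $c_n=a+\epsilon\cdot \beta_n$, where $\beta_n=1/(1+\alpha^{2n})\in(0,1)$ has height exactly $2n$ over $F$. The height of $c_n$ differs from that of $\beta_n$ by at most a constant depending on $a$ and $\epsilon$, using the standard inequalities $h(xy)\le h(x)+h(y)$ and $h(x+y)\le h(x)+h(y)+O(1)$, together with the inverse inequalities obtained by solving for $\beta_n$. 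Hence $h(c_n)\ge 2n-O(1)\to\infty$, and $c_n\in(a,b)$.

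The main obstacle I expect is justifying the height inequalities for the field operations with the definition given via minimal polynomials. My fallback is to note that $\beta_n=(c_n-a)/\epsilon$ lies in $F(a,\epsilon,c_n)$. The degree of $\beta_n$ over $F$ is $1$ and its height is $2n$. Writing the relation between $c_n$ and $\beta_n$ as a linear correspondence curve defined over $K$, Lemma \ref{compare} gives $h(\beta_n)<A\,h(c_n)+B$, so $h(c_n)>(2n-B)/A$, which is unbounded. This avoids proving the additivity inequalities separately. For the final claim, every $x\in \bar k\cap K=k$ has height $0$ under the convention that elements of $k$ have height $0$. Points of height larger than $0$ cannot lie in $k$, and since $k$ is algebraically closed in $K$, such $c$ is transcendental over $k$.
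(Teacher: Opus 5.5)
Your argument is correct and is essentially the paper's. The paper also exhibits elements of unbounded height, using cube roots $\sqrt[3]{p(\bfalpha)}$ where you use $1/(1+\alpha^{2n})\in(0,1)$, and transports them into $(a,b)$ by a fixed (fractional) linear map, applying Lemma \ref{compare} to its graph to keep the heights unbounded. Your fallback paragraph is the version to keep, since your earlier claim that an affine change moves heights only by a bounded amount overstates what Lemma \ref{compare} gives (a linear bound $h(x_i)<Ah(x_j)+B$); the argument never needs that claim.
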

\begin{Proof} First note that $K$ contains elements of arbitrarily large height. For example, if $\alpha$ is a transcendence base and 
$p(${\bf X}$)\in k[ ${\bf X}$]$ has degree $d$, 
 then $\sqrt [3]{p(\bfalpha)}$ has height $d/3$.

Using fractional linear transformations we can map any interval to any other interval.  In particular, we can map an element of arbitrarily large height into any particular interval $(a,b)$.  Lemma \ref{compare} then tells us $(a,b)$ contains elements of arbitrarily large height.
\end{Proof}
 
  \section{Main theorem}\label{MT}
  
  \begin{Proposition}\label{basic} Let $K$ be an algebraically closed field of characteristic $0$ with $2\le\td(K)$.  Let $C\subset K^2$ be an irreducible curve defined over $K$ but
  not defined over $\bar{\Q}$. There is $(x,y)\in C$ such that $x$ and $y$ are algebraically independent.
  \end{Proposition}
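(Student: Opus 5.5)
We plan to reduce to Theorem \ref{GMZ} by working over a finitely generated piece of $K$. Set $k=\bar\Q$. Let $F$ be a finitely generated subfield of $K$ over which $C$ is defined, and let $L=\overline{F}\subset K$. Since $C$ is not defined over $\bar\Q$, $\td(L/k)\ge 1$.

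If $\td(L/k)\ge 2$ we proceed directly. If $\td(L/k)=1$, we adjoin an element $t\in K$ transcendental over $L$, which exists because $\td(K)\ge 2$, and replace $L$ by $\overline{L(t)}$. In either case we obtain an algebraically closed field $L$ with $k\subset L\subset K$, $2\le\td(L/k)<\infty$, and $C$ defined over $L$. Call $C_L$ the corresponding curve in $L^2$.

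Theorem \ref{GMZ} needs $C$ \emph{not} to be defined over a subfield of transcendence degree at most $1$, and this is the main obstacle, because in the case $\td(L/k)=1$ the curve is defined over such a subfield. We handle this with a change of coordinates that raises the field of definition without affecting algebraic independence.
\begin{enumerate}[(i)]
\item Choose $s,u\in L$ such that $\bar\Q(s,u)$, together with the field of definition of $C$, has transcendence degree $2$. For instance, take $s=t$.
\item Apply an automorphism of $\A^2$ defined over $\bar\Q(s)$, such as $\phi(x,y)=(x+s,y)$ or a suitable shear.
\end{enumerate}
Algebraic independence of coordinates is not preserved under such maps, so we do something simpler instead. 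First, if $C$ is a line $x=c$ or $y=c$ with $c\notin\bar\Q$, the claim is immediate: the points $(c,y)$ with $y$ transcendental over $\bar\Q(c)$ work. Otherwise both projections are dominant. We then argue directly with Theorem \ref{GMZ} in the case where $C$ is not defined over any subfield of transcendence degree $\le1$. When $C$ is defined over $L_0=\overline{\Q(a)}$ with $a$ transcendental, we use the more elementary argument: a point $(x,y)\in C$ has algebraically dependent coordinates exactly when $x,y$ both lie in, or are algebraic over, a curve defined over $\bar\Q$. For such a point, $\td(\bar\Q(x,y))\le1$. Since $C$ is defined over $L_0$, we get $\td(L_0(x,y))\le 1$, and since $y$ is algebraic over $L_0(x)$, this forces $x\in L_0$. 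So it suffices to take $x\in K$ transcendental over $L_0$, which exists since $\td(K)\ge2$, and $y$ with $(x,y)\in C$.

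In the general case, we take a transcendence base $\bfalpha$ of $L/k$ and use Theorem \ref{GMZ}. There is $B$ such that every point of $C_L$ lying on a curve over $\bar\Q$, that is, with algebraically dependent coordinates, has height at most $B$. By Lemma \ref{compare}, it suffices to choose $x\in L$ of height larger than $A B+B'$, which is possible since heights are unbounded (e.g.\ $\sqrt[3]{p(\bfalpha)}$, as in Lemma \ref{real heights}), and then any $y$ with $(x,y)\in C$. Then $h((x,y))>B$, so $x,y$ are algebraically independent.

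The delicate step is checking that the case split is exhaustive: either $C$ is defined over some algebraically closed $L_0$ of transcendence degree $1$ (elementary argument), or not (Theorem \ref{GMZ} over a finite-transcendence-degree $L$).
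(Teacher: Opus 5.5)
There is a gap in your elementary case, where $C$ is defined over $L_0=\overline{\Q(a)}$. The step ``Since $C$ is defined over $L_0$, we get $\td(L_0(x,y))\le 1$'' does not follow.

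\begin{itemize}
\item That $C$ is defined over $L_0$ only gives $\td(L_0(x,y)/L_0)\le 1$.
\item Combined with $\td(\bar\Q(x,y))\le 1$, this yields $\td(L_0(x,y))\le 2$, not $\le 1$.
\item You never use the hypothesis that $C$ is not defined over $\bar\Q$ in this case, and the claim is false without it. The diagonal $y=x$ is defined over $L_0$, yet $(x,x)$ with $x\notin L_0$ has dependent coordinates.
\end{itemize}

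The missing idea is exactly the paper's Case 1 argument. Let $C'$ be any curve defined over $\bar\Q$. Since $C$ is irreducible and not defined over $\bar\Q$, it is not a component of $C'$, so $C\cap C'$ is finite. This finite set is defined over the algebraically closed field $L_0$, hence lies in $L_0^2$. So every $(x,y)\in C$ with $x\notin L_0$ lies on no curve over $\bar\Q$, i.e.\ has algebraically independent coordinates. Equivalently: if $x\notin L_0$, then $(x,y)$ is a generic point of $C$ over $L_0$. Any $p\in\bar\Q[X,Y]$ vanishing there then vanishes on all of $C$, which would force $C$ to be defined over $\bar\Q$.

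With that repair your proof is correct. It uses the same two ingredients as the paper, the finite-intersection argument and Theorem \ref{GMZ}, but with a slightly different case split.
\begin{itemize}
\item The paper separates $\td(k)<\td(K)$ from $\td(k)=\td(K)$, so infinite $\td(K)$ falls into the elementary case.
\item You separate ``defined over a field of transcendence degree one'' from ``not''. You handle infinite $\td(K)$ by applying Theorem \ref{GMZ} inside the algebraic closure of a finitely generated field of definition.
\end{itemize}
Both routes work. Your explicit treatment of the lines $x=c$ and $y=c$ is a small detail the paper glosses over.

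Two clean-ups:
\begin{itemize}
\item Delete the abandoned detours (adjoining $t$, and the coordinate changes (i)--(ii)); they play no role.
\item The appeal to Lemma \ref{compare} in the height step is unnecessary. Since $h((x,y))\ge h(x)$, choosing $x$ in the image of the first projection with $h(x)>B$ already suffices.
\end{itemize}
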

  \begin{Proof} Let $k$ be the algebraic closure of the field of definition of $C$.  There are two cases to consider.
  
  \smallskip \n {\bf case 1} $\td(k)<\td(K)$.
  
  If $C^\prime$ is a curve defined over $\bar \Q$, then $C\cap C^\prime$ is finite and hence a subset of $k^2$.  Thus if $(x,y)\in C$ and $x\not \in k$,
  then $x$ and $y$ are algebraically independent.
  
  \smallskip \n {\bf case 2} $\td(k)=\td(K)$.
  
 In this case $K$ must have finite transcendence degree.  Let $\bfalpha$ be a transcendence base for $K$ and we consider the Weil heights for points of $K$ over $\bar{\Q}({\bfalpha})$.
By Theorem \ref{GMZ}, there is a bound on the heights of points of $C$ that are also on curves defined over $\bar \Q$.  In particular,
there is $(x,y)\in C$ not contained in any curve defined over $\bar\Q$.  Then $x$ and $y$ are algebraically independent. 
  \end{Proof}
 
  \begin{Theorem}\label{main} Let $K$ be an algebraically closed field of characteristic $0$ with $\td(K)\ge 2$.  Let $C\subset K^4$ be an irreducible curve.  Let $C_0, C_1\subset K^2$ be the Zariski closures of the  projections of $C$ to the first two, respectively last two, coordinates.  Suppose that neither $C_0$ nor $C_1$ is  contained in a curve defined over $\bar\Q$.  Then there is $(x_0,y_0,x_1,y_1)\in C$ such that $x_0$ and $y_0$ are algebraically independent and $x_1$ and $y_1$ are algebraically independent.
  \end{Theorem}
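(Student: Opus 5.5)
The plan is to mimic the proof of Proposition \ref{basic}: split according to whether the field of definition of $C$ has full transcendence degree, and in the hard case use Theorem \ref{GMZ} together with Lemma \ref{compare} to push heights up simultaneously on $C_0$ and $C_1$.

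Let $k$ be the algebraic closure of the field of definition of $C$. Then $C_0$ and $C_1$ are also defined over $k$. Note that $C_0$ and $C_1$ are irreducible curves, since they are closures of images of an irreducible curve and are not points. If $C_0$ were a point it would lie in a curve defined over $\bar\Q$ only if its coordinates were algebraic; otherwise the claim is handled separately. More precisely, a point $(a,b)$ with $a,b$ independent already works for that factor, so assume both $C_i$ are curves.

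\textbf{Case 1:} $\td(k)<\td(K)$. Choose $(x_0,y_0,x_1,y_1)\in C$ with $x_0\notin k$; this is possible because $C$ has $K$-points off any finite set and some coordinate is nonconstant. Then the point is generic over $k$, since $C$ is a curve. Hence each coordinate that is nonconstant on $C$ is transcendental over $k$. For $i=0,1$, if both coordinates of $C_i$ are nonconstant, then $(x_i,y_i)$ lies on $C_i$ and not in $k^2$. Any curve $C'$ defined over $\bar\Q$ meets $C_i$ in finitely many points, all in $k^2$, so $(x_i,y_i)$ lies on no such curve and $x_i,y_i$ are algebraically independent. If instead one coordinate of $C_i$ is constant, say $C_i=\{a\}\times K$, then $a\notin\bar\Q$ by hypothesis. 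In that case I choose the generic point so that $y_i\notin \bar\Q(a)^{\rm alg}$, which is automatic since $y_i\notin k$.

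\textbf{Case 2:} $\td(k)=\td(K)$, so $\td(K)<\infty$. Take a transcendence base $\bfalpha$ of $K$ and heights over $\bar\Q(\bfalpha)$. Handle degenerate factors first: if $C_i$ is a vertical or horizontal line $x=a$, we need $y_i$ independent of $a$. Such points are exactly those with $h(y_i)$ large compared with $a$, which is a cofinite-height condition of the same kind as in the nondegenerate case. For nondegenerate $C_i$ (dominant onto both coordinates), Theorem \ref{GMZ} applies with $k=\bar\Q$, provided $C_i$ is not defined over a field of transcendence degree $\le 1$. Under that hypothesis it gives $B_i$ such that every point of $C_i$ with height $>B_i$ lies on no curve over $\bar\Q$, and such a point has independent coordinates.

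Now use Lemma \ref{compare} on $C\subset K^4$, after discarding constant coordinates. It says heights of all nonconstant coordinates are mutually bounded by affine functions. So if some coordinate of a point of $C$ has huge height, all coordinates have height exceeding $\max(B_0,B_1)$. Since $K$ has elements of arbitrarily large height and the projection of $C$ to a nonconstant coordinate is dominant, hence misses only finitely many values, such points exist.

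The main obstacle is the subcase where some $C_i$ is defined over a field $L$ of transcendence degree $1$ but not over $\bar\Q$, where Theorem \ref{GMZ} does not apply. There I would argue as in Case 1 relative to $L=\bar L$. If $\td(L)<\td(K)$, any point of $C_i$ with a coordinate outside $L$ has independent coordinates, by the finite-intersection argument. So I need a point of $C$ whose $i$-th coordinate is outside $L$ and which also satisfies the height condition for the other factor. I would get this by choosing the transcendence base $\bfalpha$ to extend a base of $L$ and taking points of large height. Elements of $L$ then have heights bounded in terms of the $L$-part, but this may fail, since $L$ contains elements of arbitrarily large height. Alternatively, I would note that the points of $C$ with $i$-th coordinates in $L$ form a set of points over a fixed field of transcendence degree $1$. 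I expect to show this set is avoidable by counting or specializing, for instance by specializing the extra transcendentals, and this is the step I expect to need the most care.
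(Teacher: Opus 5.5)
\textbf{There is a genuine gap.} Your Case 1 (the field of definition of $C$ has smaller transcendence degree than $K$) is correct. So is the generic part of Case 2 (all coordinate projections dominant, neither $C_i$ defined over a field of transcendence degree $\le 1$), which is exactly the paper's Lemma \ref{gen case}. The remaining configurations are where some $C_i$ is defined over a field $L=\bar{\Q(a)}$ of transcendence degree $1$; these include the degenerate lines $\{a\}\times K$ and constant coordinates. That is where the real work lies, and you leave it open: ``counting or specializing'' is not an argument.

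Your treatment of the degenerate line is also wrong as stated. The points with $y_i$ independent of $a$ are not ``exactly those with $h(y_i)$ large'', because $a^n\in\bar{\Q(a)}$ has height $n\,h(a)$, which is unbounded. This is the same objection you raise yourself a paragraph later for $L$.

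The missing idea is to take the controlling coordinate in an auxiliary transcendence-degree-one field independent of $L$, with large height there. For example, suppose $C_0$ is defined over $\bar{\Q(a)}$ and $C_1$ over no field of transcendence degree $\le 1$. Pick $b$ algebraically independent of $a$ and take $x_0\in\bar{\Q(b)}$ of large height; this is possible because the projection to the first coordinate has cofinite image. Then $x_0\notin\bar{\Q(a)}$, so $x_0,y_0$ are independent by your finite-intersection argument. Meanwhile Lemma \ref{compare} pushes $h(x_1)$ above the bound that Theorem \ref{GMZ} gives for $C_1$; this is Lemma \ref{last lemma}. A constant coordinate is handled the same way after discarding it.

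When both $C_0$ and $C_1$ are defined over transcendence-degree-one fields, you also need to change the base field of the height. If both are over $\bar{\Q(a)}$, measure heights over $\bar{\Q(a)}(\bfalpha)$, so that positive height certifies $x_i\notin\bar{\Q(a)}$. If they are over $\bar{\Q(a)}$ and $\bar{\Q(b)}$ with $a,b$ independent, the paper (Lemma \ref{both td 1}) splits into cases:
\begin{itemize}
\item it applies Theorem \ref{GMZ} to the projection $\Gamma$ of $C$ to the $(x_0,x_1)$-plane, choosing $x_1\in\bar{\Q(a)}$ of large height; this forces $x_0\notin\bar{\Q(a)}$ and $x_1\notin\bar{\Q(b)}$;
\item when $\Gamma$ itself is defined over a transcendence-degree-one field, it instead chooses $x_0$ of large height in a third field, such as $\bar{\Q(a+b)}$ (with heights over $\bar{\Q(b)}$) or $\bar{\Q(c)}$.
\end{itemize}
Without this case analysis your argument covers only the generic situation.
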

  
 Let $K$, $C$, $C_0$ and $C_1$ be as in the statement of the theorem.
 If $K$ has infinite transcendence degree, we could replace $K$ by any algebraically closed subfield of transcendence degree at least 2 over which $C$ is defined.  Thus we may, without loss of generality, assume that $K$ has finite transcendence degree.\footnote{More to the point, if $K$ has infinite transcendence degree we can choose $(x_0,y_0,x_1,y_1)$ such that some coordinate
 is transcendental over the field of definition of $C$ and this point will have the desired property.}
 
 We prove Theorem \ref{main} in a sequence of lemmas.  We first deal with the most general case.
 
 \begin{Lemma}\label{gen case} Suppose $C$ projects dominantly onto each coordinate and neither $C_0$ nor $C_1$ is defined over a subfield of transcendence degree $1$. Then there is $(x_0,y_0,x_1,y_1)\in C$ with $x_0,y_0$ algebraically independent and $x_1,y_1$ algebraically independent.
 \end{Lemma}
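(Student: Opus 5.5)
We work in the setting fixed just before the lemma: $K$ has finite transcendence degree. We take $k=\bar\Q$, let $\bfalpha$ be a transcendence base of $K$, and measure Weil heights over $\bar\Q(\bfalpha)$. Then $\td(K/k)=\td(K)\ge 2$, so Theorem \ref{GMZ} applies. By hypothesis, neither $C_0$ nor $C_1$ is defined over a subfield of transcendence degree at most $1$. (Transcendence degree $0$ is excluded because the curves are not contained in curves defined over $\bar\Q$.) Let $X$ be the set of points of $K^2$ lying on some curve defined over $\bar\Q$. Theorem \ref{GMZ} gives constants $B_0$ and $B_1$ such that every point of $X\cap C_0$ has height at most $B_0$, and every point of $X\cap C_1$ has height at most $B_1$.

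Observe that a point $(x,y)\in K^2$ has $x,y$ algebraically dependent over $\Q$ exactly when it lies in $X$. Indeed, a nonzero polynomial over $\Q$ vanishing at $(x,y)$ defines a curve over $\bar\Q$ containing the point, and the converse is clear. So it suffices to find $(x_0,y_0,x_1,y_1)\in C$ with $h((x_0,y_0))>B_0$ and $h((x_1,y_1))>B_1$. Such a point is outside $X$ in both projections, and hence both pairs are algebraically independent.

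To produce such a point we use Lemma \ref{compare} on the curve $C\subset K^4$, which projects dominantly onto each coordinate. Applying it with $j$ the first coordinate and $i$ the third, we get $h(x_0)\ge (h(x_1)-B)/A$, and similarly $h(x_1)\ge (h(x_0)-B')/A'$. By the remark after Lemma \ref{compare}, we can choose $N$ so large that $h(x_0)\ge N$ forces $h(x_1)>B_1$. We may also take $N>B_0$.

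It remains to find a point of $C$ whose first coordinate has height at least $N$. Since $C$ projects dominantly onto the first coordinate, its image is cofinite in $K$, because the image of an irreducible curve under a dominant map to $\mathbb A^1$ misses only finitely many points. As in the proof of Lemma \ref{real heights}, $K$ has elements of arbitrarily large height. For example, $\alpha_1^m$ with $m$ large has height $m$, and infinitely many such elements are distinct, so one of them, call it $x_0$, lies in the image. Pick $(x_0,y_0,x_1,y_1)\in C$ over it. Then $h((x_0,y_0))\ge h(x_0)>B_0$ and $h((x_1,y_1))\ge h(x_1)>B_1$, which finishes the argument.

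The main obstacle is only checking hypotheses: that Theorem \ref{GMZ} applies with $k=\bar\Q$, and that the height of a pair dominates the height of each coordinate. The latter holds because $h$ of a pair is the sum of nonnegative coordinate heights. Everything else is a direct combination of Theorem \ref{GMZ} with Lemma \ref{compare}.
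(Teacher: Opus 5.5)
Your proof is correct and follows the same route as the paper. Theorem~\ref{GMZ} with $k=\bar\Q$ bounds the heights of points of $C_0$ and $C_1$ lying on curves over $\bar\Q$, and Lemma~\ref{compare} applied to $C\subset K^4$ makes a large $h(x_0)$ force a large $h(x_1)$. You additionally show, via cofiniteness of the image of the dominant first-coordinate projection, that a point of $C$ with $h(x_0)$ that large exists, a step the paper leaves implicit.
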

 \begin{Proof} Let $\bfalpha$ be a transcendence base for $K$ and consider the associated Weil height. By Theorem \ref{GMZ}, there are $B_0$ and $B_1$, such if $(x,y)\in C_i$ and $h(x_i)\ge B_i$, then $x$ and $y$ are algebraically independent.
 By Lemma \ref {compare}, there is $B_0^\prime \ge B_0$ such that if $(x_0,y_0,x_1,y_1)\in C$ and $h(x_0)\ge B_0^\prime$,
 then $h(x_1) \ge B_1$.  In this case $x_0$ and $y_0$ are algebraically independent and $x_1$ and $y_1$ are algebraically independent.
 \end{Proof}
 
 \medskip The next  lemmas deal with cases where some of the assumptions of Lemma \ref{gen case} fail.

\begin{Lemma} Suppose either $C_0$ or $C_1$ is a point. Then there is $(x_0,y_0,x_1,y_1)\in C$ such that
$x_0,y_0$ are algebraically independent and $x_1,y_1$ are algebraically independent.
 \end{Lemma}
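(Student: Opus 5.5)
The plan is to show that if $C_0$ is a point, then the hypothesis of Theorem \ref{main} forces that point to have algebraically independent coordinates, so the remaining work reduces to Proposition \ref{basic} applied to $C_1$. By symmetry it suffices to treat the case $C_0=\{(a,b)\}$.

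First, note that $C_0$ and $C_1$ cannot both be points. Otherwise $C$, being contained in the closure of the product of the two projections, would be contained in a single point of $K^4$, contradicting that $C$ is a curve. So $C_1$ is an irreducible curve (the closure of the image of an irreducible curve, not a point). Moreover $C\subseteq \{(a,b)\}\times C_1$. Since $C$ is one-dimensional and $\{(a,b)\}\times C_1$ is irreducible of dimension one, $C=\{(a,b)\}\times C_1$.

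Next, I will check that $a,b$ are algebraically independent. Suppose instead that $p(a,b)=0$ for some nonzero $p\in\Q[X,Y]$. Then $p$ has a factor with coefficients in $\bar\Q$ that vanishes at $(a,b)$, and this factor defines a curve over $\bar\Q$ containing $C_0$. This contradicts the hypothesis that $C_0$ is not contained in a curve defined over $\bar\Q$. The case $a\in\bar\Q$ is included, via the line $X=a$. Hence $a$ and $b$ are algebraically independent.

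Finally, $C_1$ is not contained in a curve defined over $\bar\Q$, so in particular $C_1$ is not itself defined over $\bar\Q$. Proposition \ref{basic} therefore gives $(x_1,y_1)\in C_1$ with $x_1,y_1$ algebraically independent. The point $(a,b,x_1,y_1)$ lies in $C$ and has the required property. The case where $C_1$ is a point is symmetric. The only mildly delicate step is the identification $C=\{(a,b)\}\times C_1$, which ensures that every point of $C_1$ actually lifts to a point of $C$; it follows from irreducibility and dimension as above.
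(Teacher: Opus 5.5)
Your proof is correct and follows the paper's argument: the hypothesis that $C_0$ is not contained in a curve over $\bar\Q$ forces $a,b$ to be algebraically independent, and Proposition \ref{basic} applied to $C_1$ supplies the independent pair $(x_1,y_1)$. Your extra checks, that $C_1$ is a curve and that $C=\{(a,b)\}\times C_1$ (so every point of $C_1$ lifts to a point of $C$), fill in steps the paper leaves implicit.
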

 \begin{Proof}
 Say $C_0$ is the point $(a,b)$. Because $C_0$ is not contained in a curve defined over $\bar \Q$, $a$ and $b$ are algebraically independent.  By Proposition \ref{basic}, we can find $(x_1,y_1)\in C_1$ algebraically independent. Then $(a,b,x_1,y_1)$ is our desired point of $C$.
 \end{Proof}
 
 \medskip
 Thus, we may, without loss of generality, assume that $C_0$ and $C_1$ are both curves.

  \begin{Lemma}\label{both td 1} Suppose both $C_0$ and $C_1$ are defined over subfields of transcendence degree $1$. Then we can find $(x_0,y_0,x_1,y_1)\in C$ with $x_0,y_0$ algebraically independent and  $x_1,y_1$ algebraically independent.
  \end{Lemma}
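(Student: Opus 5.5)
Write $k_0$ and $k_1$ for the algebraic closures of fields of definition of $C_0$ and $C_1$, each of transcendence degree $1$ by hypothesis. The plan is to mimic case 1 of Proposition \ref{basic}: force one coordinate of each $C_i$ to lie outside $k_i$.

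\textbf{Key observation.} Suppose $(x,y)\in C_i$ and $x$, $y$ are algebraically dependent. Then $(x,y)$ lies on a curve $C'$ defined over $\bar\Q$. Since $C_i$ is an irreducible curve not contained in a curve over $\bar\Q$, the intersection $C_i\cap C'$ is finite and defined over $k_i$, so $(x,y)\in k_i^2$. Now choose, for $i=0,1$, a coordinate function $u_i$ on $C_i$ that is nonconstant; one exists because $C_i$ is a curve. It then suffices to find a point of $C$ with $u_0\notin k_0$ and $u_1\notin k_1$. Both $u_0$ and $u_1$, viewed as functions on $C$, are nonconstant and hence dominant onto $K$ with finite fibres.

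\textbf{Reduction to finite transcendence degree.} Let $F$ be the algebraic closure of a field of definition of $C$ containing $k_0$ and $k_1$. If $\td(F)<\td(K)$, pick a point of $C$ with $u_0\notin F$; this is possible since $u_0$ is surjective up to finitely many values. Only finitely many points of $C$ have $u_1\in F$, and all their coordinates lie in $F$. Hence such a point already has $u_0\notin k_0$ and $u_1\notin k_1$. So assume $\td(F)=\td(K)$, which is finite, and hence $\td(K/k_1)\ge 1$.

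\textbf{Height argument.} Fix a transcendence base of $K$ over $k_1$ and let $h_1$ be the corresponding Weil height; elements of $k_1$ have height $0$. The Zariski closure of the projection of $C$ to the coordinates $(u_0,u_1)$ is an irreducible curve projecting dominantly onto both coordinates. Lemma \ref{compare} therefore gives $A,B$ with $h_1(u_0)<Ah_1(u_1)+B$ on $C$. Thus any point with $h_1(u_0)\ge B$ has $h_1(u_1)>0$, so $u_1\notin k_1$. It remains to pick a value $t$ with $h_1(t)\ge B$ and $t\notin k_0$, and then any point of $C$ with $u_0=t$; all but finitely many values are attained.

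\textbf{Main obstacle.} The difficulty is producing $t$ that simultaneously has large $h_1$-height and lies outside $k_0$. By the proof of Lemma \ref{real heights}, adapted to $K$, there are elements $t'$ of arbitrarily large $h_1$-height. If $t'\in k_0$, replace it by $t=t'+s$ for a fixed $s\notin k_0$: take $s\in k_1\setminus k_0$ if $k_0\ne k_1$, and otherwise any element of $K\setminus k_0$. Then $t\notin k_0$, and by the standard inequalities $|h_1(t'+s)-h_1(t')|\le h_1(s)+O(1)$ we still get $h_1(t)\ge B$ once $h_1(t')$ is large enough. This step, controlling heights of sums via the elementary triangle-type inequality for Weil heights, is the one I expect to need the most care.
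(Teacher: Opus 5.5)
Your argument is correct, and it takes a different and more uniform route than the paper.

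\textbf{How the paper argues.} It fixes $C_0$ over $\overline{\Q(a)}$ and splits into cases. If $C_1$ is also defined over $\overline{\Q(a)}$, it argues essentially as you do: heights over $\overline{\Q(a)}(\bfalpha)$ together with Lemma \ref{compare}. If $C_1$ is defined over $\overline{\Q(b)}$ with $a,b$ independent, it splits further according to the field of definition of the projection $\Gamma$ of $C$ to the first and third coordinates:
\begin{itemize}
\item if $\Gamma$ is not defined over a field of transcendence degree at most $1$, it invokes Theorem \ref{GMZ} on $\Gamma$;
\item otherwise it picks $x_0$ of large height inside a prescribed subfield such as $\overline{\Q(a+b)}$ or $\overline{\Q(c)}$.
\end{itemize}

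\textbf{How you argue.} You reduce everything to $u_0\notin k_0$ and $u_1\notin k_1$. You get the second condition from a single height relative to $k_1$ plus Lemma \ref{compare} applied to the image curve of $(u_0,u_1)$. You get the first by a translation; your shift by $s\in k_1\setminus k_0$ plays the role of the paper's choice $x_0\in\overline{\Q(a+b)}$.

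\textbf{What each route buys.} Your route shows that Theorem \ref{GMZ} is not needed for this lemma. In fact your argument only uses that $k_0$ and $k_1$ are proper subfields of $K$, so it also covers part of Lemma \ref{last lemma}. The paper's route chooses coordinates inside specified transcendence-degree-one subfields, and that is what later forces the density hypothesis in Corollary \ref{ind pairs}. Your requirement (a value of large $h_1$-height outside $k_0$) looks easier to meet inside an interval.

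\textbf{Two small corrections.}
\begin{enumerate}
\item In the reduction step, the sentence ``only finitely many points of $C$ have $u_1\in F$'' is false as written: $C$ has infinitely many $F$-points. What you need, and what is true, is this. For each $c\in F$ the fibre $u_1^{-1}(c)$ is finite and defined over the algebraically closed field $F$, so it consists of $F$-points. Hence a point with $u_0\notin F$ has $u_1\notin F$.
\item The step you flag as the main obstacle needs no triangle inequality.
\begin{itemize}
\item If $k_0=k_1$, any $t'$ with $h_1(t')>0$ already lies outside $k_1=k_0$, so no translation is needed.
\item If $k_0\ne k_1$, you take $s\in k_1$. Translating by this constant turns the minimal polynomial $P(X)$ of $t'$ into $P(X-s)$. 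Its coefficients are obtained from those of $P$ by an invertible $k_1$-linear change, so they are still coprime and have the same maximal degree in the transcendence base. Hence $h_1(t'+s)=h_1(t')$ exactly.
\end{itemize}
\end{enumerate}
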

  \begin{Proof}  Let $C_0$ be defined over $\bar{\Q(a)}$.  
  
  Permuting coordinates if necessary, we may assume that the projections of $C$ onto the first and third coordinates are dominant.

  \smallskip \n {\bf case 1} $C_1$ is defined over $\bar{\Q(a)}$.
  
  Let $\bfalpha$ be a transcendence base for $K$ over $\bar{\Q(a)}$ and  consider the Weil heights over  the base field $\bar{\Q(a)}(\bfalpha)$.   By Lemma \ref{compare} we can choose $(x_0,y_0,x_1,y_1)\in C$ where $x_0$ is of sufficiently large height that $x_1$ has positive height.  In particular, both $x_0$ and $x_1$ are transcendental over $\Q(a)$.   If $(u,v)\in C_i$ and $u$ and $v$ are algebraically dependent, then $u,v\in \bar{\Q(a)}$. Thus $x_0$ and $y_0$ are algebraically independent and $x_1$ and $y_1$ are algebraically independent.
  
  \smallskip \n {\bf case 2} $C_1$ is not defined over $\bar{\Q(a)}$.
  
  Suppose $C_1$ is defined over $\bar{\Q(b)}$ where $a$ and $b$ are algebraically independent.
  Let $a,b,\bfalpha$ be a transcendence base for $K$ and consider the corresponding Weil height.
  Let $\Gamma\subset K^2$ be the projection of $C$ onto the first and third coordinates. We first 
  consider the case where $\Gamma$ is not defined over a field of transcendence degree at most 1.
 By Theorem \ref{GMZ}, we can  choose $(x_0,y_0,x_1,y_1)\in C$ where $x_1\in \bar{\Q(a)}$ 
  has sufficiently large height  to insure $x_0$ and $x_1$ are algebraically independent.
 Since $x_1\not\in \bar{\Q(b)}$, $x_1$ and $y_1$ are algebraically independent.
 Since $x_1$ and $a$ are interalgebraic over $\Q$, $x_0$ is algebraically independent from $a$.
 Thus $x_0$ and $y_0$ are algebraically independent.
 
 Next, suppose that $\Gamma$ is defined over a field of transcendence degree at most one. 
 First we consider the case where  $\Gamma$ is defined over $\bar{\Q(a)}$. Let $a,\bar\alpha$ be a transcendence base for
 $K$ over $\bar {\Q(b)}$ and consider the corresponding Weil height. 
 Choose $(x_0,y_0,x_1,y_1)\in C$
 where $x_0\in \bar{\Q(a+b)}$ has sufficiently large height. In particular $x_0$ is transcendental over $\Q(a)$, so $x_0$ and $y_0$ are algebraically independent.
 By Lemma \ref{compare}, we can choose $x_0$ of sufficiently large height that $x_1$ is transcendental over $\Q(b)$.
 Then $x_1$ and $y_1$ are algebraically independent.  
  
The case where $\Gamma$ is defined over $\bar {\Q(b)}$ is similar. Finally, consider the case where $\Gamma$ is defined over $\bar{\Q(c)}$ where $a,b,c$ are algebraically independent.  By Lemma \ref{compare}, we can choose $x_0\in\bar{\Q(c)}$ of sufficiently large height to insure $x_1$ is transcendental.  Since $\Gamma$ is defined over $\bar{\Q(c)}$, $x_1\in \bar{\Q(c)}$. Since $x_0$ is transcendental over $\Q(a)$, $x_0$ and $y_0$ are algebraically independent. Similarly, $x_1$ and $y_1$ are algebraically
independent. 
  \end{Proof}
  
  \begin{Lemma} If some coordinate projection is constant, then there is $(x_0,y_0,x_1,y_1)\in C$ such that
  $x_0,y_0$ are algebraically independent and $x_1,y_1$ are algebraically independent.
  \end{Lemma}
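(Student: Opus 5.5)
By symmetry (swapping the roles of $C_0$ and $C_1$, and of the two coordinates within each pair), I may assume the constant coordinate is $x_0$, say $x_0\equiv a$ on $C$. We have already reduced to the case where $C_0$ and $C_1$ are both curves. Hence $C_0$ is the vertical line $\{a\}\times K$, and $y_0$ is non-constant on $C$.

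Since $C_0$ is not contained in a curve defined over $\bar\Q$, $a$ must be transcendental. Otherwise $C_0$ itself would be a curve defined over $\bar\Q$. For a point of $C$, the condition that $x_0,y_0$ are algebraically independent therefore becomes simply $y_0\notin\bar{\Q(a)}$.

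The plan is to find a point of $C$ with $y_0\notin\bar{\Q(a)}$ and, simultaneously, $(x_1,y_1)$ algebraically independent. I split on the shape of $C_1$.

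\emph{Case A: some coordinate of $C_1$ is also constant}, say $x_1\equiv c$ (the case $y_1\equiv c$ is symmetric). As before, $c$ is transcendental. Then $C$ is essentially a curve in the $(y_0,y_1)$-plane, and the requirement is $y_0\notin\bar{\Q(a)}$ and $y_1\notin\bar{\Q(c)}$.
\begin{itemize}
\item Take $\bfalpha$ to be a transcendence base of $K$ over $\bar{\Q(a,c)}$ if $a,c$ are independent, or over $\bar{\Q(a)}$ if they are dependent. Use the Weil height over $\bar{\Q(a,c)}(\bfalpha)$.
\item If $\td(K)$ exceeds $\td(\bar{\Q(a,c)})$, this height is available. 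Lemma \ref{compare}, applied to the projection of $C$ to $(y_0,y_1)$, then gives a point where $y_0$ has large height, so that $y_1$ has positive height. Both are then transcendental over $\bar{\Q(a,c)}$, which is what we need.
\item Otherwise $K=\bar{\Q(a,c)}$ with $a,c$ independent. Then I choose $y_0\in\bar{\Q(a+c)}$ of large height over $\bar{\Q(a)}$ (base $c$), imitating the argument of Lemma \ref{both td 1}. The $\bar{\Q(a+c)}$ device is there to keep $y_1$ off $\bar{\Q(c)}$, using Lemma \ref{compare} with heights taken over $\bar{\Q(c)}$ (base $a$) as well.
\end{itemize}

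\emph{Case B: $C_1$ projects dominantly on both coordinates.} I use the same two-step strategy as in Lemmas \ref{gen case} and \ref{both td 1}. First, let $k'$ be $\bar{\Q(a)}$ together with the field of definition of $C_1$. There are two subcases.
\begin{itemize}
\item If $C_1$ is not defined over a subfield of transcendence degree $\le1$, apply Theorem \ref{GMZ} to $C_1$ with heights over a base containing $a$. This gives a $B_1$ such that $h(x_1)\ge B_1$ forces $x_1,y_1$ to be independent. Lemma \ref{compare} on the projection of $C$ to $(y_0,x_1)$ then makes $h(y_0)$ positive, so $y_0\notin\bar{\Q(a)}$.
\item If $C_1$ is defined over $\bar{\Q(b)}$, argue as in Lemma \ref{both td 1}. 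Pick heights over $\bar{\Q(a,b)}(\bfalpha)$ when there is room. If $K=\bar{\Q(a,b)}$ with $a,b$ independent, pick $x_1$ or $y_0$ in $\bar{\Q(a+b)}$ (or in a $\bar{\Q(c)}$ over which the projection to $(y_0,x_1)$ is defined) of large height. This keeps $y_0$ off $\bar{\Q(a)}$, keeps $x_1$ off $\bar{\Q(b)}$, and hence forces independence in both pairs. Note that on $C_1$, dependent pairs lie in $\bar{\Q(b)}^2$.
\end{itemize}

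The main obstacle is the degenerate situation where $K$ has transcendence degree exactly $2$ and all the relevant fields of definition have transcendence degree $1$. There, the single available height function must simultaneously witness transcendence over two different one-dimensional subfields. Here I expect to need the trick of restricting a coordinate to an auxiliary field such as $\bar{\Q(a+b)}$, together with Theorem \ref{GMZ} applied to the appropriate two-coordinate projection of $C$, exactly as in case 2 of Lemma \ref{both td 1}. I would first try to reduce each subcase literally to that argument, treating the constant coordinate $a$ as playing the role of the field of definition of $C_0$.
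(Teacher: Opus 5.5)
Your decomposition is essentially the paper's. You split according to whether $C_1$ is defined over a subfield of transcendence degree $1$: when it is not, you use Theorem~\ref{GMZ} with Lemma~\ref{compare}; when it is, you use the argument of Lemma~\ref{both td 1}. The paper simply quotes Lemma~\ref{both td 1} in that second case, because $C_0=\{a\}\times K$ is defined over $\bar{\Q(a)}$. This also absorbs your Case A, since $\{c\}\times K$ is defined over $\bar{\Q(c)}$. Your handling of these transcendence-degree-one situations (Case A and the second subcase of Case B) is sound.

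The gap is in the first subcase of Case B. For Theorem~\ref{GMZ} to detect algebraic dependence of $(x_1,y_1)$ over $\Q$, you must take $k=\bar\Q$ and a transcendence base $a,\bfalpha$ of $K$ over $\bar\Q$. For that height, $h(y_0)>0$ only gives $y_0\notin\bar\Q$, not $y_0\notin\bar{\Q(a)}$; for instance $h(a^N)=N$. So choosing $x_1$ of large height and transferring through Lemma~\ref{compare} does not control the pair $(x_0,y_0)$.

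Concretely, take $K=\bar{\Q(a,b)}$ and $C=\{(a,t,t,at+b)\}$. Here $C_1$ is the line $y=ax+b$, which is not defined over any field of transcendence degree $1$. Your recipe permits $x_1=a^N$, which forces $y_0=a^N$, algebraic over $\Q(x_0)$.

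If you instead meant heights relative to the base field $\bar{\Q(a)}$, then Theorem~\ref{GMZ} is unavailable when $\td(K)=2$, since it requires $\td(K/k)\ge 2$.

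The fix, which is exactly the paper's argument, is to reverse the order of choice:
\begin{enumerate}
\item First pick $y_0\notin\bar{\Q(a)}$ of large height, e.g.\ a high power of a member of $\bfalpha$. This is possible because $\td(K)\ge 2$ and the projection of $C$ to the $y_0$-coordinate has cofinite image.
\item Then Lemma~\ref{compare}, applied to the projection of $C$ to $(y_0,x_1)$, gives $h(x_1)\ge B_1$.
\item Hence $x_1,y_1$ are algebraically independent by Theorem~\ref{GMZ}.
\end{enumerate}
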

 \begin{Proof} Without loss of generality assume $C\subset \{a\}\times K^3$. By our assumptions on $C_0$, we must have $a\not\in \bar\Q$.
Let $a,\bfalpha$ be a transcendence base for $K$ and we work with the associated Weil height.  Suppose $C_1$ is not defined over a subfield of transcendence degree 1.  As in the proof of Propositon \ref{basic}, there is a bound $B$ such that if 
$(x_1,y_1)\in C_1$ has height at least $B$, then $x_1$ and $y_1$ are algebraically independent.  By Lemma \ref{compare}, we can choose $y_0$ algebraically independent from $a$ such that  $(a,y_0,x_1,y_1)\in C$ and $h(x_1)\ge B$. Then $x_0$ and $y_0$ 
are algebraically independent and $x_1$ and $y_1$ are algebraically independent.

If $C_1$ is defined over a subfield of transcendence degree one, then we are done by Lemma \ref{both td 1}.
\end{Proof}

\medskip
Thus we may assume that all coordinate projection maps from $C$ are dominant.

\begin {Lemma}\label{last lemma} Suppose $C_0$ is defined over a subfield of transcendence degree $1$ and $C_1$ is not.  Then
there is $(x_0,y_0,x_1,y_1)\in C$ such that $x_0$ and $y_0$ are algebraically independent and $x_1$ and $y_1$ are algebraically independent.
\end{Lemma}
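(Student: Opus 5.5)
Let $C_0$ be defined over $\bar{\Q(a)}$ with $a$ transcendental, and work under the standing reductions: $K$ has finite transcendence degree, $C_0,C_1$ are curves, and every coordinate projection of $C$ is dominant. The key observation, already used in Lemma \ref{both td 1}, is that any $(u,v)\in C_0$ with $u,v$ algebraically dependent lies on a curve over $\bar\Q$ and hence in a finite subset of $\bar{\Q(a)}^2$. It is therefore enough to have $x_0\notin\bar{\Q(a)}$. For $C_1$, which is not defined over a field of transcendence degree at most $1$, we apply Theorem \ref{GMZ} with $k=\bar\Q$. This requires $\td(K)\ge 2$; if $\td(K)<\infty$ this holds, and we may assume it by enlarging the transcendence base if necessary. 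For a fixed transcendence base $a,\bfalpha$ of $K$ and its Weil height, Theorem \ref{GMZ} gives $B_1$ such that every $(x_1,y_1)\in C_1$ with $h(x_1)\ge B_1$ has $x_1,y_1$ algebraically independent.

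The plan is to find a point of $C$ satisfying both requirements at once. By Lemma \ref{compare} applied to $C$, which projects dominantly onto every coordinate, there is $B_0'$ such that $h(x_0)\ge B_0'$ forces $h(x_1)\ge B_1$. So we need $x_0\notin\bar{\Q(a)}$ with $h(x_0)\ge B_0'$. Heights are computed over $\bar\Q(a,\bfalpha)$, so $a$ itself has height $0$ or $1$. Elements of $\bar{\Q(a)}$ can nevertheless have large height, so large height alone does not keep $x_0$ outside $\bar{\Q(a)}$.

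To control both conditions, I would change the base field: take heights over $\bar{\Q(a)}(\bfalpha)$, with $\bfalpha$ a transcendence base of $K$ over $\bar{\Q(a)}$. Then every element of $\bar{\Q(a)}$ has height $0$, so any $x_0$ of positive height is automatically outside $\bar{\Q(a)}$. The remaining issue is applying Theorem \ref{GMZ} to $C_1$ over the base $k=\bar{\Q(a)}$. This needs $\td(K/\bar{\Q(a)})\ge 2$ and that $C_1$ is not defined over a field of transcendence degree at most $1$ over $\bar{\Q(a)}$.

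Assume first that $C_1$ is not defined over any field of transcendence degree at most $1$ over $\bar{\Q(a)}$. Then Theorem \ref{GMZ} gives $B_1$ such that points of $C_1$ of height at least $B_1$ lie on no curve defined over $\bar{\Q(a)}$, and in particular on no curve over $\bar\Q$, so their coordinates are independent. Combined with Lemma \ref{compare}, choosing $x_0$ of height at least $\max(1,B_0')$ then finishes the proof, since $x_0\notin\bar{\Q(a)}$ as well.

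The main obstacle is the remaining case, where $C_1$ is defined over $\bar{\Q(a,b)}$ but over no field of transcendence degree $1$; this includes the case $\td(K)=2$. Here I would use the original base $a,b,\bfalpha$ over $\bar\Q$. I would choose $x_0$ inside $\bar{\Q(b)}$ (or $\bar{\Q(a+b)}$) of large height, which is possible because $C$ projects dominantly onto the first coordinate. Such an $x_0$ is transcendental over $\Q(a)$, so $x_0,y_0$ are independent. Lemma \ref{compare} then makes $h(x_1)$ large, and Theorem \ref{GMZ} over $\bar\Q$ gives the independence of $x_1,y_1$. This mirrors the subcase arguments of Lemma \ref{both td 1}, and I expect checking that this choice of $x_0$ interacts correctly with the height bounds to be the delicate point.
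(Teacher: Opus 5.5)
Your proof is correct. Your last case is in substance the paper's entire proof: take heights over $\bar\Q$ with respect to a base $a,b,\bfalpha$, and choose $x_0\in\bar{\Q(b)}$ of large height. You then use Lemma \ref{compare} to get $h(x_1)\ge B_1$, apply Theorem \ref{GMZ} over $\bar\Q$ to $C_1$, and note that $\bar{\Q(b)}\cap\bar{\Q(a)}=\bar\Q$, so $x_0\notin\bar{\Q(a)}$. The step you flag as delicate is not: powers $b^n$ have height $n$, and the first projection of $C$ has cofinite image, so a suitable $x_0$ exists.

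What you missed is that this argument never uses the assumption that $C_1$ is defined over $\bar{\Q(a,b)}$. It only needs some $b$ algebraically independent from $a$, which exists because $\td(K)\ge 2$. The paper takes $b=b_1$ from the field of definition of $C_1$ over $\Q(a)$, but any such $b$ works. So the case split, and your first case applying Theorem \ref{GMZ} relative to $k=\bar{\Q(a)}$, are superfluous.

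That first case is still a valid idea, and it genuinely differs from the paper's route. Making $\bar{\Q(a)}$ the constant field lets positive height alone certify $x_0\notin\bar{\Q(a)}$. Then $x_0$ need not lie in a prescribed transcendence-degree-one subfield, and that placement is exactly what forces the density hypothesis in Corollary \ref{ind pairs}. The cost is a stronger hypothesis: $C_1$ must not be defined over any field of transcendence degree at most $1$ over $\bar{\Q(a)}$, which already requires $\td(K)\ge 3$. So it cannot replace the paper's argument in general.

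One minor point: your remark about ``enlarging the transcendence base'' is unnecessary. The standing assumptions $2\le\td(K)<\infty$ are exactly what Theorem \ref{GMZ} over $k=\bar\Q$ requires.
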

 \begin{Proof} Suppose $C_0$ is defined over $\bar{\Q(a)}$ and $C_1$ is defined over $\bar{\Q(a,b_1,\dots,b_m)}$ but 
 not over $\bar{\Q(a)}$, where $b_1,\dots,b_m$ are algebraically independent over $\Q(a)$ and $m\ge 1$.  By Theorem \ref{GMZ},
 there is $B$ such that if $(x_1,y_1)\in C_1$ and $h(x_1)\ge B$, then $x_1$ and $y_1$ are algebraically independent.
 By Lemma \ref{compare}, we can choose $(x_0,y_0,x_1,y_1)\in C$ where $x_0\in\bar{\Q(b_1)}$ is of large enough height  to insure
 that $x_1$ has height at least $B$ and $x_1$ and $y_1$ are algebraically independent.  Since $x_0$ is transcendental over $\bar{\Q(a)}$, $x_0$ and $y_0$ are algebraically independent.
 \end{Proof}

 \medskip  Taken together Lemmas \ref{gen case}--\ref{last lemma} yield the proof of Theorem \ref{main}.
 
 \medskip We expect that Theorem \ref{main} could be generalized to show that if $K$ is an algebraically closed field of characteristic zero and $C\subset K^{2n}$ is an irreducible curve such that no $C_i$ is  contained in a curve defined over $\bar \Q$, where $C_i\subset K^2$ is the projection onto coordinates $2i+1$ and $2i+2$, then there is 
 $(x_0,y_0,\dots, x_{n-1},y_{n-1})\in C$
 such that $x_i$ and $y_i$ are algebraically independent for all $i$.  The generic case where each coordinate projection map from $C$ is dominant and no $C_i$ is  defined over a subfield of transcendence degree~1 follows exactly as in Lemma \ref{gen case}, but work is needed to hash out the details when one of these additional assumptions fails.

  \section{Real  algebraic consequences}\label{real}
  
  Let $K$ be a real closed field and let $(a,b)$ be an interval in $K$.
  
  \begin{Definition} Suppose $f,g:(a,b)\rightarrow K$ are definable over $K$.  We say that $f$ and $g$ are {\em algebraically independent}  
  if there is no nonzero polynomial $p(X,Y)$ defined over $\Q^\rcl$ such that $p(f(x),g(x))=0$ on some subinterval of $(a,b)$.
  \end{Definition}
  
  \begin{Corollary} \label{real main} If $K$ is a real closed field of transcendence degree at least 2 and $f,g:(a,b)\rightarrow K$ are definable over $K$ and  algebraically 
  independent, then there is $z\in (a,b)$ such that $f(z),g(z)$ are algebraically independent.
  \end{Corollary}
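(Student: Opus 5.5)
We reduce Corollary \ref{real main} to Proposition \ref{basic}, working in the algebraic closure of $K$. Since $f$ and $g$ are definable in the real closed field $K$, o-minimality (cell decomposition) lets us shrink to a subinterval $(a',b')\subseteq (a,b)$ on which $f$ and $g$ are continuous and semialgebraic. The graph $\{(x,f(x),g(x)) : x\in(a',b')\}$ is then a semialgebraic curve, so its Zariski closure in $K^3$ is one-dimensional. The image $\{(f(x),g(x)) : x\in(a',b')\}$ is likewise semialgebraic of dimension at most one.

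\textbf{Step 1 (the image is one-dimensional).} If the image were finite, then on a further subinterval $f$ and $g$ would be constants $c,d$. The polynomial $p(X,Y)$ we need must have coefficients in $\Q^\rcl$, so constants do not immediately give a dependence. In this case we simply choose $z$ in the subinterval: then $(f(z),g(z))=(c,d)$ for every such $z$. The hypothesis that $f,g$ are algebraically independent, applied to constant functions, forces $c,d$ to be algebraically independent, since otherwise some nonzero $p\in\Q^\rcl[X,Y]$ with $p(c,d)=0$ would witness a dependence on the subinterval. (One also has to check that the notion of independence over $\Q^\rcl$ agrees with independence over $\Q$; this holds because $\Q^\rcl$ is algebraic over $\Q$.)

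So we may assume the image is infinite. Then there is an irreducible plane curve $D\subset \bar K^2$, defined over $K$, containing $(f(x),g(x))$ for all $x$ in some subinterval $I$. This follows by taking the Zariski closure of the image over $K$ and an irreducible component containing the image of infinitely many points. By monotonicity we may shrink further so that this component contains the image of all of $I$.

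\textbf{Step 2 (applying Proposition \ref{basic}).} Suppose $D$ were contained in a curve defined over $\bar\Q$, that is, the zero set of some $q\in\bar\Q[X,Y]$. Taking the norm of $q$ over $\Q$ gives a nonzero $p\in\Q[X,Y]\subseteq \Q^\rcl[X,Y]$ vanishing on $D$. Then $p(f(x),g(x))=0$ on $I$, contradicting independence. Hence Proposition \ref{basic}, applied in $\bar K$ (which has transcendence degree at least $2$), shows that $D$ has $\bar K$-points with algebraically independent coordinates. This alone is not enough, since we need such a point that is real and lies in the arc $\{(f(z),g(z)) : z\in I\}$.

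\textbf{Step 3 (the main obstacle: locating the point on the arc).} Here we use heights, as in the proof of Proposition \ref{basic}. First reduce to finite transcendence degree: replace $K$ by the real closure of a finitely generated field over which $f$, $g$, $a$ and $b$ are defined, adjoining a further transcendental if needed so that the transcendence degree is at least $2$. Elements of this subfield are still in $K$, and algebraic independence is absolute, so nothing is lost.

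If the field of definition $k$ of $D$ has $\td(k)<\td(K)$, the argument of case 1 in Proposition \ref{basic} applies. Lemma \ref{real heights}, taken with base $k$, gives $z\in I$ transcendental over $k$. Then $f(z)\notin k$, since $f$ is nonconstant, monotone and defined over $k$, so $f(z)\in \dcl(k,z)$ is interalgebraic with $z$. Points of $D\cap C'$, for $C'$ a curve over $\bar\Q$, lie in $k^2$, so $(f(z),g(z))$ is on no such curve and its coordinates are algebraically independent.

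Otherwise, take Weil heights over $\Q^\rcl(\bfalpha)$. By Theorem \ref{GMZ} (if $D$ is not defined over a field of transcendence degree $1$), or by the case-1 argument with a base of transcendence degree $1$ (if it is), points of $D$ of large height have independent coordinates. Lemma \ref{real heights} supplies $z\in I$ of arbitrarily large height. Lemma \ref{compare}, applied to the Zariski closure of the graph of $(f,g)$ on $I$, transfers large height of $z$ to large height of $f(z)$. Hence $(f(z),g(z))$ has independent coordinates. The delicate points are the legitimacy of comparing heights through the real graph, which is Zariski dense in an irreducible curve projecting dominantly onto each coordinate, and the handling of the transcendence degree $1$ subcase.
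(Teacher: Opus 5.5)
Your overall plan matches the paper's. You pass to a plane curve $D$ through the image of $(f,g)$, split as in Proposition \ref{basic} according to whether $\td(k)<\td(K)$ or $\td(k)=\td(K)$, and produce the needed point with Lemma \ref{real heights}. Your large-height branch is fine: going through $z$ and applying Lemma \ref{compare} to the graph is a harmless variant of applying Lemma \ref{real heights} directly to the interval $f(I)$.

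\textbf{The gap.} The branch $\td(k)<\td(K)$ has a step that fails as written. You assert that $f$ is defined over $k$, the field of definition of $D$, and conclude that any $z\in I$ transcendental over $k$ has $f(z)\notin k$. The parameters defining $f$ need not lie in $k$.

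For example, let $K$ be the real closure of $\Q(s,t)$ with $s,t$ algebraically independent, take $(a,b)=(-s,-s+1)$, $f(x)=x+s$ and $g(x)=x+t$. These are algebraically independent functions. $D$ is the line $Y=X+(t-s)$, so $k$ is the real closure of $\Q(t-s)$ and $\td(k)=1<2$. Yet $z=-s+\frac12$ is transcendental over $k$ while $f(z)=\frac12$, so $(f(z),g(z))$ is algebraically dependent. Requiring $z$ to be transcendental over $k$ together with the parameters of $f$ does not help, since here that field is already all of $K$.

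\textbf{The fix.} Choose the value rather than the argument, as the paper does. Since $f$ is continuous, strictly monotone and nonconstant on $I$, $f(I)$ is an open interval. Lemma \ref{real heights} with base $k$ gives $x\in f(I)$ transcendental over $k$, and you put $z=f^{-1}(x)$. Alternatively, take $z$ of large height relative to a transcendence base of $K$ over $k$ and apply Lemma \ref{compare} to the graph, exactly as in your other branch. This forces $h(f(z))>0$, i.e.\ $f(z)\notin k$.

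\textbf{A smaller omission.} Step 1 only handles the case where $f$ and $g$ are both constant. Suppose exactly one is constant, say $f\equiv c$ with $g$ nonconstant. Then the image is infinite, $D$ is the line $X=c$ with $c$ transcendental, and you land in the branch $\td(k)=1<\td(K)$. Your argument there relies on $f$ being nonconstant, so you must use $g$ instead: pick a value in the interval $g(I)$ transcendental over the real closure of $\Q(c)$. This is precisely the paper's preliminary constant case.

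Finally, the transcendence-degree-$1$ subcase in your ``otherwise'' branch is vacuous, because there $\td(k)=\td(K)\ge 2$.
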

  \begin{Proof}  We may assume $f$ and $g$ are strictly monotonic on $(a,b)$.  Suppose $f$ is constant, say $f(x)=d$ 
  for $x\in (a,b)$. We must have $d$ transcendental, as otherwise the graph of $(f,g)$ is contained in the curve $X=d$.
  If $g$ is non-constant we can find $z\in(a,b)$ such that $g(z)$ algebraically independent from $d$.  If $g$ is also constant,
  say $g(x)=e$, then algebraic independence tells us $d$ and $e$ must be algebraically independent, so any $z\in (a,b)$ suffices.
  Thus, without loss of generality, we may assume $f$ is non-constant.  Similarly, we may assume that $g$ is non-constant.
  
  By quantifier elimination, there is a curve $C_0$ defined over $K$ such that $(f(x),g(x))\in C_0$ for all $(x,y)\in (a,b)$.
   We now move to $\bar K$.  Let $C$ be an irreducible  curve over $\bar K$ contained in the Zariski closure of $C_0(K)$
   such that $C\cap \img(f,g)$ is infinite.  Following the proof of Proposition \ref{basic}, if $C$ is defined over a subfield $k$ with 
   $\td(k)<\td(K)$, then it suffices to find $x\in\img(f)$ with $x\not\in k$ and if $\td(k)=\td(K)$ it suffices to find $(x,y)$ of sufficiently large height.  In either case the existence of the desired $x$ follows from Lemma \ref{real heights}.
  \end{Proof}
  
  \medskip A similar analysis of the proof of Theorem \ref {main} leads to the following Corollary that will be used in Section 
  \ref{rigid}.
  
  \begin{Corollary}\label{ind pairs} Suppose $K$ is a real closed field of transcendence degree at least $2$ and  every real closed subfield of positive transcendence degree is dense. Let $a,b\in K$ with $a<b$ and
  $f_0,g_0,f_1,g_1:(a,b)\rightarrow K$ be defined over $K$ such that $f_i,g_i$ are algebraically independent for $i=0,1$. Then there is $z\in (a,b)$
  such that $f_i(z),g_i(z)$ are algebraically independent for $i=0,1$.
  \end{Corollary}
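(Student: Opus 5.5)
We mimic the proof of Corollary \ref{real main}, with Theorem \ref{main} (through the case analysis of Lemmas \ref{gen case}--\ref{last lemma}) in place of Proposition \ref{basic}. The real-closed input is Lemma \ref{real heights}, which supplies points of large height and transcendental points in any interval. The density hypothesis on real closed subfields will be used wherever the proof of Theorem \ref{main} picks $x_0$ from a proper algebraically closed subfield such as $\bar{\Q(b_1)}$ or $\bar{\Q(c)}$.

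First, by o-minimality we shrink $(a,b)$ to a subinterval on which all four functions are continuous and either constant or strictly monotone. Algebraic independence of $f_i,g_i$ is inherited by subintervals, since it is defined via subintervals. We dispose of constant coordinates as in Corollary \ref{real main}.
\begin{itemize}
\item If $f_0$ is constant, with value $d$, then $d$ is transcendental.
\item If in addition $g_0$ is constant, then its value is independent from $d$, and it remains to handle the pair $f_1,g_1$ alone, which is exactly Corollary \ref{real main}.
\item Otherwise, we need $z$ such that $g_0(z)\notin\overline{\Q(d)}^{\rm rcl}$ and $f_1(z),g_1(z)$ are independent. This mirrors the lemma on constant coordinate projections in the proof of Theorem \ref{main}.
\end{itemize}
Assume from now on that all four functions are nonconstant and strictly monotone. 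Then the map $z\mapsto(f_0(z),g_0(z),f_1(z),g_1(z))$ is injective. By quantifier elimination its image lies in an algebraic curve over $K$. Passing to $\bar K$, choose an irreducible component $C\subset\bar K^4$ meeting the image in infinitely many points, hence, by o-minimality, in the image of a subinterval. Let $C_0,C_1$ be the projections of $C$ to the first two and last two coordinates. Independence of $f_i,g_i$ implies $C_i$ is not contained in a curve over $\bar\Q$, and each coordinate projection of $C$ is dominant.

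We then rerun the case analysis of Theorem \ref{main}, with heights relative to a transcendence base of $K$ chosen inside the real closed field $K$, and with $\bar K$ replacing $K$. In each case that proof finds the point by taking a coordinate, say $x_0$, of sufficiently large height, possibly subject to lying in a specified algebraically closed subfield $L=\bar{\Q(t)}$. It then uses Theorem \ref{GMZ} and Lemma \ref{compare} to conclude that all sufficiently high points of $C$ are good. Here we must find $z$ with $f_0(z)$ of large height and, where required, $f_0(z)\in L$. Since $f_0$ is a homeomorphism of the interval onto an interval $J$, it suffices to find such an element in $J$.
\begin{itemize}
\item Without the subfield constraint, Lemma \ref{real heights} applies directly.
\item With the constraint, we apply Lemma \ref{real heights} to the real closed subfield $L\cap K$, which is dense by hypothesis. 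This yields elements of $L\cap K$ in $J$ of arbitrarily large height, heights being computed relative to a base containing $t$.
\end{itemize}
In Lemma \ref{gen case} no constraint is needed, so Lemma \ref{real heights} alone suffices.

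The main obstacle is making this translation precise in the subcase of Lemma \ref{both td 1} that chooses $x_1\in\bar{\Q(a)}$ of large height using Theorem \ref{GMZ} on the projection $\Gamma$, and in the subcases where $x_0\in\bar{\Q(a+b)}$ or $x_0\in\bar{\Q(c)}$. Two things must be checked there. First, the needed subfields are real closed subfields of $K$ of positive transcendence degree, so that the density hypothesis applies; we may take the generators $a,b,c$ in $K$, because fields of definition of the components can be taken inside $K$ up to conjugation. Second, Lemma \ref{real heights}, applied inside such a subfield with a base adapted to it, still yields large height with respect to the heights used in Theorem \ref{GMZ} and Lemma \ref{compare}. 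I would handle the second point by choosing the global transcendence base to extend the generator of the subfield.

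The two pairs may require different such choices, but a single $z$ is obtained from a single coordinate condition, as in the original lemmas. Composing these choices gives $z$ with $f_i(z),g_i(z)$ algebraically independent for $i=0,1$.
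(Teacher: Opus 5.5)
Your proposal is correct and follows the paper's own sketched argument. You rerun the case analysis of Theorem \ref{main} in the real setting, modelled on Corollary \ref{real main}, using Lemma \ref{real heights} for points of large height and the density hypothesis to place the chosen coordinate $f_i(z)$ inside the required transcendence-degree-one subfield within the image interval. One small tightening: the generators $a,b,c$ can be taken in $K$ simply because $C$ contains infinitely many $K$-points, so it is ${\rm Gal}(\bar K/K)$-stable and hence defined over $K$; no conjugation is needed.
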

  
 Note that at several points in the proof of Theorem \ref{main} we choose $x_i$ in a specified subfield of transcendence one--for example case 2 of the proof of Lemma \ref{both td 1}.  We need to insure that $x_i$ can also be chosen in $\img(f_i)$.  Our added assumption that all real closed subfields of positive transcendence degree are dense guarantees this.  While sufficient for our applications in \S 5, it would be interesting to have a proof of Corollary \ref{ind pairs} without this assumption.
  
  \section{Rigid real closed fields}\label{rigid}
  
  We say that a structure is {\em rigid} if it has no non-trivial automorphisms. In real closed fields, the positive elements are the nonzero squares. Thus
every automorphism of a real closed field preserves the ordering as well as the field structure.  If $K$ is an Archimedean real closed field, the field of rational numbers is dense in $K$ and fixed pointwise by all automorphisms; hence $K$ is rigid.  

Are there non-Archimedean rigid real closed fields?  In \cite{ss},  Shelah proved that it is consistent with ZFC that there are.  For example, he showed, assuming Jensen's combinatorial principle $\diamondsuit_{\omega_1}$,  that there are rigid non-Archimedean real closed fields of cardinality
$\aleph_1$.   In later work,   Mekler and Shelah \cite{ms} revisited Shelah's results and showed that  the existence of arbitrarily large rigid non-Archimedean real closed fields
could be proved in ZFC without extra set-theoretic assumptions.  Remarkably, they proved that the fragment of second order logic where all structures are ordered fields and we can quantify over their automorphism groups is compact.  Add to the language of ordered
fields a constant symbol $c$ and let $\Gamma$ be the real closed fields axioms together with the sentences
$${c>n: n\in \Nn}\cup\{\forall \sigma\forall x \ \sigma(x)=x\}.$$ For any finite $\Delta\subset\Gamma$ we can interpret $c$ in $\R$ so 
that $\Delta$ holds.  Thus, by compactness, there are arbitrarily large models of $\Gamma$.  They use Shelah's ``black box"
which is useful for doing some $\diamondsuit$-like constructions in ZFC.  The models built are large (of cardinality at least
$\beth_\omega$, say).

In 2018 Ali Enayat asked on MathOverflow if there are countable rigid non-Archimedean real closed fields.  In \cite{marker stein}
Marker and Steinhorn constructed a rigid real closed field of transcendence degree 2.  Our construction was later significantly generalized by Michael Lange \cite{lange} to build rigid closed fields of transcendence degree $n\ge 2$ for all $n\in \Nn$.  We thought that this would be the limit
of the method, but Lange found a clever diagonalization strategy to build a rigid non-Archimedean real closed field of transcendence degree $\aleph_0$.
 
 Here we introduce a different construction that allows us to build rigid non-Archimedean fields of transcendence degree $\kappa$
for all $2\le\kappa\le\aleph_1$. 

\begin{Definition}
  We say that a real closed field $K$ has {\em unique independent pairs} if whenever $a,b\in K$ are algebraically independent, then
  $(a,b)$ is the unique realization of $\tp(a,b)$ in $K$. 
  \end{Definition}
    
    \medskip
  If $K$ has unique independent pairs and  transcendence degree at least 2, then $K$ is rigid.  Also note that a rigid real closed field of transcendence degree 2 has unique independent pairs.   In particular, this is true of the field constructed in \cite{marker stein}.
  
  Real closed fields with unique independent pairs also satisfy the additional hypothesis of Corollary \ref{ind pairs}.
  
  \begin{Lemma} Suppose $K$ is a real closed field with unique independent pairs.  Then every real closed subfield of $K$ of positive transcendence degree is dense.
  \end{Lemma}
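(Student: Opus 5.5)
The plan is to argue by contradiction. From a gap in $F$ we will produce two distinct realizations in $K$ of the type of one algebraically independent pair.

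Let $F\subseteq K$ be a real closed subfield with $\td(F)\ge 1$, and suppose $F$ is not dense in $K$. Then there are $a<b$ in $K$ with $(a,b)\cap F=\emptyset$. Fix $t\in F$ transcendental over $\Q$, and let $k$ be the real closure of $\Q(t)$ inside $K$, that is, the set of elements of $K$ algebraic over $\Q(t)$. Since $F$ is real closed and contains $t$, it is relatively algebraically closed in $K$. Hence $k\subseteq F$, and so $(a,b)\cap k=\emptyset$ as well.

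First consequence: every $s\in(a,b)$ is transcendental over $\Q(t)$, because otherwise $s\in k\subseteq F$. So for each $s\in(a,b)$ the pair $(t,s)$ is algebraically independent. Now choose two distinct elements $s<s'$ in $(a,b)$; this is possible since intervals in a real closed field are infinite. I claim $\tp(t,s)=\tp(t,s')$. By quantifier elimination for real closed fields (o-minimality), $\tp(t,s)$ is determined by $\tp(t)$ together with $\tp(s/\{t\})$. For $s\notin \dcl(t)=k$, the latter is determined by the cut of $s$ in $k$, namely by which elements of $k$ lie below $s$. Every element of $k$ lies outside $(a,b)$, so it is either $\le a<s,s'$ or $\ge b>s,s'$. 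Thus $s$ and $s'$ realize the same cut over $k$, and therefore $\tp(t,s)=\tp(t,s')$.

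This gives two distinct realizations $(t,s)\neq(t,s')$ of the type of an algebraically independent pair, contradicting unique independent pairs. So $F$ is dense.

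The only step requiring care is the middle one: justifying that the type of a single element over $\dcl(t)$ is determined by its cut. This is the standard o-minimal (Tarski) fact that every $\{t\}$-definable subset of $K$ is a finite union of points and intervals with endpoints in $\dcl(t)=k$. Note also that the argument never uses $\td(K)\ge 2$: in the degenerate case where $F$ is not dense, the interval $(a,b)$ itself supplies elements transcendental over $F$.
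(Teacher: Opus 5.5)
Your proof is correct and is essentially the paper's argument. The paper takes a transcendental element of the non-dense subfield and two distinct points in a gap, notes that both pairs are algebraically independent, and uses o-minimality to see that they have the same type, contradicting unique independent pairs. You add the details the paper leaves implicit: the gap also misses $\dcl(t)$ because a real closed subfield is relatively algebraically closed in $K$, and a type over $t$ is determined by its cut in $\dcl(t)$.
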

  \begin{Proof}. Suppose $k\subset K$ is a real closed subfield of positive transcendence degree that is not dense.  There is an interval $(a,b)$ in $K$ with $(a,b)\cap k=\emptyset$.  Let $y,z\in (a,b)$ with $y\ne z$.  Let $x\in k$ be transcendental.  By o-minimality, 
  $\tp(x,y)=\tp(x,z)$ and both pairs $x,y$ and $x,z$ are algebraically independent, contradicting unique independent pairs.
  \end{Proof}
  
  \medskip
Our goal is to prove the following theorem.  Here $K\< x\> $ denotes the real closure of $K(x)$.
  
  \begin{Theorem}\label{uip} Suppose $K$ is a countable  real closed field of transcendence degree at least 2 with unique independent pairs.  
  We can find a non-principal type $p\in S_1(K)$ such that $K\< x\> $ has unique independent pairs for any $x$ realizing $p$.
  \end{Theorem}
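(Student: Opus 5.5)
We will build $p$ as the limit of a descending sequence of intervals (cuts) in $K$, using a countable diagonalization. Since $K$ is countable, the type $p$ of $x$ over $K$ is determined by the cut of $x$ in $K$. The construction chooses nested intervals $(a_0,b_0)\supset(a_1,b_1)\supset\cdots$ in $K$ so that the resulting cut is non-principal; we interleave steps forcing this by shrinking away from each element of $K$ in turn. We also interleave requirements guaranteeing unique independent pairs in $K\langle x\rangle$.

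Every element of $K\langle x\rangle$ has the form $f(x)$ for some $K$-definable function $f$. An independent pair in $K\langle x\rangle$ is therefore $(f(x),g(x))$. A competing realization of its type is $(f_1(x),g_1(x))$ with $(f_1,g_1)\ne(f,g)$ near $x$. We enumerate all quadruples $(f,g,f_1,g_1)$ of $K$-definable functions; there are countably many. At stage $n$, handling the $n$th quadruple, we work on the current interval $(a_n,b_n)$. By o-minimality we shrink the interval so that all four functions are continuous and monotone there and either $(f,g)=(f_1,g_1)$ identically or the two pairs differ everywhere on it. In the first case there is nothing to do. If $f,g$ are not algebraically independent as functions, then $(f(x),g(x))$ lies on a $\Q^{\rcl}$-curve and the pair is dependent, so again nothing is needed.

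Otherwise we want to kill the possibility that $\tp(f(x),g(x))=\tp(f_1(x),g_1(x))$. If $f_1,g_1$ are dependent as functions, then any $x$ in the interval makes $(f_1(x),g_1(x))$ dependent, hence of a different type from an independent pair, and we are done. If both pairs are independent functions, we apply Corollary \ref{ind pairs}. Its extra hypothesis holds by the preceding Lemma, since $K$ has unique independent pairs. This gives $z\in(a_n,b_n)$ with $(f(z),g(z))$ and $(f_1(z),g_1(z))$ both algebraically independent pairs in $K$. Because $K$ has unique independent pairs and these pairs are distinct, their types over $\emptyset$ differ. Hence some $\Q^{\rcl}$-formula, which we may take to be a polynomial sign condition by quantifier elimination, $\phi(u,v)$, holds of one pair and fails of the other. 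The set of $t$ with $\phi(f(t),g(t))\wedge\neg\phi(f_1(t),g_1(t))$ is $K$-definable and contains $z$. It is a finite union of points and intervals, and we need an open subinterval.

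This is the main obstacle: $z$ might be an isolated point of that set. To handle it, we first use o-minimal cell decomposition to shrink $(a_n,b_n)$ so that, for every one of the finitely many polynomials $P$ involved, the sign of $P(f(t),g(t))$ and of $P(f_1(t),g_1(t))$ is constant. But we cannot fix $\phi$ before knowing $z$. We therefore argue by contradiction instead. Suppose that on every subinterval the two pairs agree on all sign conditions they hold on open sets. Then on a subinterval the sets of $\Q^{\rcl}$-polynomials vanishing identically along each curve coincide, and the curves avoid each other's varieties generically. Corollary \ref{ind pairs} applied inside a subinterval where all relevant signs are constant then yields $z$ with distinct types, and the distinguishing formula has constant truth value near $z$. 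The remaining subtlety is that the countably many polynomials are not finite. To deal with this, we pick $z$ and $\phi$ first, then take the open cell around $z$ on which $\phi$ has constant truth value for both pairs. Such a cell exists unless $z$ is a boundary point, and by applying the corollary within the finitely many open cells for $\phi$ we can retry. This gives $(a_{n+1},b_{n+1})$ on which the pairs have different types for every $x$ in the interval.

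Finally, $p$ is the type of the resulting cut. It is non-principal by the interleaved steps. Any $x$ realizing $p$ lies in every $(a_n,b_n)$, so for every quadruple the pairs $(f(x),g(x))$ and $(f_1(x),g_1(x))$ are either equal or of distinct types whenever the first is independent. Thus $K\langle x\rangle$ has unique independent pairs.
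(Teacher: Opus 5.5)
Your overall architecture matches the paper's: enumerate quadruples, shrink intervals, and use Corollary \ref{ind pairs} together with unique independent pairs in $K$ to get a point $z$ where the two pairs have different types. But the step you yourself flag as the main obstacle is not actually resolved. The ``retry'' at the end has no termination guarantee. Applying Corollary \ref{ind pairs} again inside an interval on which $\phi$ has constant truth value only yields a new point $z'$ and a new distinguishing formula $\phi'$. Nothing you say prevents $z'$ from being a bad point for $\phi'$, and so on indefinitely. The preceding ``argue by contradiction'' paragraph simply asserts that the distinguishing formula has constant truth value near $z$, which is exactly what has to be proved.

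The missing idea is that the bad case never occurs, because both pairs at $z$ are algebraically independent, and an independent pair lies in no $\emptyset$-definable set of dimension $\le 1$. Concretely, after quantifier elimination $\phi$ is a Boolean combination of conditions $P(u,v)>0$, $P(u,v)=0$ with $P\in\Q[u,v]$. No nonzero $P$ vanishes at $(f(z),g(z))$ or at $(f_1(z),g_1(z))$, so all relevant signs are strict and hence locally constant. By continuity of $f,g,f_1,g_1$, the formula $\phi(f(t),g(t))\wedge\neg\phi(f_1(t),g_1(t))$ then holds on an open interval around $z$, so $z$ is never isolated.

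This is precisely the content of the paper's Lemma \ref{main lemma}. Take the $\emptyset$-definable cell $C\subseteq\phi(K)$ containing $(f(z),g(z))$; it is an open $2$-cell because that pair is independent. The point $(f_1(z),g_1(z))$ is not in $C$. It is also not on the frontier $\cl(C)\setminus C$, an $\emptyset$-definable set of dimension $\le 1$, because it too is independent; hence it is not in $\cl(C)$. Since ``$\in C$'' and ``$\notin \cl(C)$'' are open conditions, continuity gives the subinterval directly. With this observation your proof goes through, and the retry and contradiction scaffolding can be dropped.
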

  
  Let $K$ be a non-Archimedean real closed field of transcendence degree at least 2 with unique independent pairs.
 We we will construct a type $p\in S_1(K)$ such that if $x$ realizes $p$, and $f_0,g_0,f_1,g_1:(a,b)\rightarrow K$ are $K$-definable
 functions then either:
 
 i) $(f_0(x),g_0(x))=(f_1(x), g_1(x))$;
 
 ii) $f_0(x)$ and $g_0(x)$ are algebraically dependent over $\Q^\rcl$;
 
 iii) $\tp(f_0(x),g_0(x)) \ne \tp(f_1(x),g_1(x))$
 
 \n Since every element of $K\< x\> $ is of the form $f(x)$ for some $K$-definable function, this will insure that $K\< x\> $
 has unique independent pairs.
 
 \medskip
 
 For $C$ an open cell, let $\cl(C)$ denote the topological closure of $C$.

  \begin{Lemma}\label{main lemma} Let $K$ be real closed field of transcendent degree at least 
  $2$ with unique independent pairs. Suppose  $a,b\in K$,  $a<b$ and $f_i,g_i:(a,b)\rightarrow K$ for $i=0,1$ are
  continuous monotonic, $K$-definable functions where $f_i$ and $g_i$ are algebraically independent on $(a,b)$ for $i=0,1$   and
  $f_0(x)\ne f_1(x)$ or $g_0(x)\ne g_1(x)$ for all $x\in (a,b)$.  
  There is an $\emptyset$-definable  $2$-cell $C$ defined over $\emptyset$ and an interval $(a^\prime,b^\prime)\subseteq (a,b)$
  such that
 $(f_i(x), g_i(x)))\in C$ and $(f_{1-i}(x),g_{1-i}(x))\not \in \cl(C)$ for any $x\in (a^\prime, b^\prime)$ for $i=0$ or 1.
 \end{Lemma}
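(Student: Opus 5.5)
Since the two maps $x\mapsto (f_i(x),g_i(x))$ are distinct at every point, the idea is to find one point $z$ where both image pairs are algebraically independent, and then separate the two pairs there by an $\emptyset$-definable cell, using unique independent pairs.

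First, apply Corollary \ref{ind pairs} on $(a,b)$. Its extra hypothesis holds: by the preceding lemma, every real closed subfield of $K$ of positive transcendence degree is dense. The corollary gives $z\in(a,b)$ such that $(f_0(z),g_0(z))$ and $(f_1(z),g_1(z))$ are both algebraically independent pairs. Set $p_i=(f_i(z),g_i(z))$. By hypothesis $p_0\neq p_1$. By unique independent pairs, $p_0$ is the unique realization of $\tp(p_0)$ in $K$, so $\tp(p_0)\ne\tp(p_1)$. By quantifier elimination there is an $\emptyset$-definable set $D\subseteq K^2$ with $p_0\in D$ and $p_1\notin D$; if instead the separating formula holds at $p_1$, swap the roles and take $i=1$.

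Next, shrink $D$ to a cell. Use cell decomposition over $\emptyset$ (so over $\Q^\rcl$) for $D$. Since $p_0$ is algebraically independent, it lies in no $\emptyset$-definable set of dimension $\le1$: such a set is contained in a curve over $\Q^\rcl$, and a point on such a curve has algebraically dependent coordinates. Hence $p_0$ lies in an open $2$-cell $C\subseteq D$ defined over $\emptyset$. We also need $p_1\notin \cl(C)$. We know $p_1\notin C$. Moreover $\cl(C)\setminus C$ is $\emptyset$-definable of dimension $\le 1$, and $p_1$ is algebraically independent, so $p_1\notin \cl(C)\setminus C$. Therefore $p_1\notin\cl(C)$.

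Finally, pass from the point $z$ to an interval. The functions $f_i,g_i$ are continuous, $C$ is open, and $K^2\setminus\cl(C)$ is open. So the set of $x$ with $(f_0(x),g_0(x))\in C$ and $(f_1(x),g_1(x))\notin\cl(C)$ is a $K$-definable subset of $(a,b)$ that contains $z$ and is open. By o-minimality it contains an interval $(a',b')\ni z$, and this interval together with $C$ proves the lemma.

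The main obstacle is the first step, producing $z$ with both pairs independent simultaneously. All the height machinery of Theorem \ref{main} is used there, through Corollary \ref{ind pairs}. After that, the remaining steps are routine o-minimal arguments. The only subtle point among them is that the closure boundary of $C$ has dimension $\le1$ and so cannot contain an independent pair.
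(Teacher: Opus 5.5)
Your proof is correct and is essentially the paper's argument. The paper chooses the same point via Corollary \ref{ind pairs} and argues by contradiction: if no $\emptyset$-definable cell separated the two pairs, then the independent pair $(f_1(c),g_1(c))$, which cannot lie on the boundary of an $\emptyset$-definable open cell, would lie in every such cell containing $(f_0(c),g_0(c))$, forcing equal types. Your direct construction is just the contrapositive of this, and the final continuity step to get an interval is the same.
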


  \begin{Proof}
  Let $C$ be cell.  Suppose there is $x\in (a,b)$ such that $(f_i(x),g_i(x))\in C$ and $(f_{1-i}(x),g_{1-i}(x))\not \in \cl(C)$ for $i=0$ or 1.
  By continuity we can find a subinterval $(a^\prime, b^\prime)$ such that our conclusion holds.  
  So suppose, for contradiction that $$(f_i(x),g_i(x))\in C\Rightarrow (f_{1-i}(x),g_{1-i}(x))\in\cl(C)$$ for $i=0$ and 1.
  
 By Corollary \ref{ind pairs}, we can choose $c\in (a,b)$ such that $f_i(c)$ and $g_i(c)$ are algebraically independent  for $i=0,1$.  Suppose $\phi(v_1,v_2)$ is a formula over  $\emptyset$ and $\phi(f_0(c), g_0(c))$.  Let $C$ be a cell defined over 
$\emptyset$ such that $(f_0(c), g_0(c))\in C$ and $C\subseteq \phi(K)$. Since $f_0$ and $g_0$ are algebraically independent,
$C$ must be an open cell.   We must have $(f_1(c),g_1(c))\in \cl(C)$.  But then,
by our choice of $c$, $f_1(c)$ and $g_1(c)$ are algebraically independent, and, hence, not on the boundary of $C$. Thus
$(f_1(c),g_1(c))$ lies in $C$.  Thus $\tp(f_0(c),g_0(c))=\tp(f_1(c),g_1(c))$ contradicting
the fact that $K$ has unique independent pairs.
  \end{Proof}

  \bigskip \n {\bf Proof of Theorem \ref{uip}}
  Let $K$ be a countable real closed field of transcendence degree at least 2 with unique independent pairs.  We construct a type $p\in S_1(K)$ in stages. 
  At stage $s$ we will have $a_s,b_s\in K$ with $a_s<b_s$. We will also have $K$-definable functions
 $f_0,f_1,g_0,g_1:(a_s,b_s)\rightarrow K.$  At this stage our goal is to shrink the interval to guarantee that if $x$ is a realization of the type then $(f_0(x), g_0(x))$ and $(f_1(x), g_1(x))$ are not two distinct pairs of algebraically independent elements realizing the same type over $\emptyset$.
 We do this by finding $(a_{s+1}^\prime,b_{s+1}^\prime)\subseteq (a_s,b_s)$
  such that either:
  
  i) $f_0(x)=f_1(x)$ and $g_0(x)=g_1(x)$ on $(a_{s+1}^\prime,b_{s+1}^\prime)$;
  
  ii) $f_0$ and $g_0$ are interalgebraic on $(a_{s+1}^\prime,b_{s+1}^\prime)$ or 
  $f_1$ and $g_1$ are interalgebraic on $(a_{s+1}^\prime,b_{s+1}^\prime)$;

  \n or
  
  iii) there is a formula $\phi(v_1,v_2)$ with no parameters such that 
 $$\phi(f_1(x),g_1(x))\land \neg\phi(f_2(x),g_2(x))\hbox{ for } x\in  (a_{s+1}^\prime,b_{s+1}^\prime).$$

  \medskip Without loss of generality, we may assume that $f_i$ and $g_i$ are continuous and 
  strictly monotonic on $(a,b)$.  If  there is a subinterval where i) holds we are done.  If not, we may, without loss of generality,
  assume that $f_0(x) \ne f_1(x)$ or $g_0(x)\ne g_1(x)$ for all $x\in (a_s,b_s)$.
  If there is a subinterval where $f_i$ and $g_i$ are interalgebraic, then ii) hold on that
  subinterval, So we may, without loss of generality, assume that neither pair is interalgebraic on a subinterval.
  By Lemma  \ref{main lemma}, we can find a subinterval $(a_{s+1},b_{s+1})$ and an $\emptyset$-definable cell $C$
  such that $$(f_i(x),g_i(x))\in C \land (f_{1-i}(x),g_{1-i}(x))\not\in \cl(C)$$ for all $x\in (a_{s+1},b_{s+1})$.
\hfill \qed

  \begin{Corollary}\label{stretch} There are non-Archimedean real closed fields with unique independent pairs of transcendence degree $\kappa$ for all $2\le\kappa\le \aleph_1$.  In particular, these fields are rigid.
  \end{Corollary}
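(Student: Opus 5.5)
The plan is a transfinite chain construction driven by Theorem \ref{uip}. I will build a continuous increasing chain $(K_\beta:\beta<\kappa')$ of countable real closed fields with unique independent pairs, where each $K_{\beta+1}=K_\beta\<x_\beta\>$ and $x_\beta$ realizes the non-principal type given by Theorem \ref{uip}. The union of the chain is the desired field. To start, I need a countable real closed field $K_0$ of transcendence degree $2$ with unique independent pairs that is non-Archimedean. The field of \cite{marker stein} is rigid of transcendence degree 2, so it has unique independent pairs. It is non-Archimedean, since otherwise rigidity would be trivial and the field would embed in $\R$; in any case I would cite \cite{marker stein} for this. This settles $\kappa=2$.

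For $2<\kappa\le\aleph_1$, I write $\kappa=2+\gamma$ and iterate for $\gamma$ steps, where $\gamma=\kappa$ when $\kappa$ is infinite. Successor steps use Theorem \ref{uip}. This step needs the current field to be countable, which holds at every stage $\beta<\omega_1$, because countable unions of countable fields are countable. Non-principality of the realized type gives $x_\beta\notin K_\beta$. Since $K_\beta$ is real closed, $x_\beta$ is transcendental over it, so the transcendence degree goes up by exactly one at each step. At limit stages I take unions.

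The key verification is that the property of unique independent pairs is preserved under unions of chains. Suppose $a,b\in\bigcup K_\beta$ are algebraically independent and $(a',b')$ realizes $\tp(a,b)$. Here types are over $\emptyset$, and by quantifier elimination they do not depend on the ambient real closed field. Then $a,b,a',b'$ all lie in some single $K_\beta$, and unique independent pairs in $K_\beta$ gives $(a',b')=(a,b)$.

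It remains to check the transcendence degree, the non-Archimedean property, and rigidity. At stage $\gamma$ the transcendence degree is $2+|\gamma|$ for finite $\gamma$. For infinite $\kappa$ it is $\kappa$: when $\kappa=\aleph_1$ the union has length $\omega_1$ and is uncountable, so its transcendence degree is $\aleph_1$, while for $\kappa=\aleph_0$ we stop at stage $\omega$. The non-Archimedean property passes upward, since an infinite element of $K_0$ stays infinite. Rigidity follows from the remark that unique independent pairs together with transcendence degree at least 2 implies rigidity.

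The main obstacle is simply ensuring that Theorem \ref{uip} is applicable at every stage. This is the reason the construction stops at $\aleph_1$: past $\omega_1$ the stages would be uncountable, and countability is essential there. I would flag this as the source of the bound.
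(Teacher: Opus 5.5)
Your proposal is correct and follows the paper's proof. You start from the countable non-Archimedean field of transcendence degree $2$ constructed by Marker and Steinhorn, build a continuous chain of length $\omega_1$ using Theorem \ref{uip} at successor stages (applicable because every stage below $\omega_1$ is countable) and unions at limits, and read off transcendence degrees $n+2$, $\aleph_0$, $\aleph_1$ at stages $n$, $\omega$, $\omega_1$. You also fill in details the paper leaves implicit: that unique independent pairs is preserved under unions of chains (by quantifier elimination for real closed fields), that $x_\beta$ is transcendental over $K_\beta$, and that being non-Archimedean persists upward. One small point: your remark that the base field is non-Archimedean ``since otherwise rigidity would be trivial'' is motivation, not a proof; citing the Marker--Steinhorn construction, as you and the paper both do, is what actually supplies this.
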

  \begin{Proof} Let $K_0$ be the non-Archimedean rigid field of transcendence degree 2 constructed in \cite{marker stein}.  We can build
  $(K_\alpha:\alpha<\omega_1)$ such that $K_\alpha\subset K_{\alpha+1}$ for all $\alpha$,
  $K_\alpha=\bigcup_{\beta<\alpha} K_\beta$ for $\alpha$ a limit ordinal and each $K_\alpha$ is a
  real closed field with unique independent pairs.   Then $K=\bigcup_{\alpha<\omega_1}K_\alpha$ is a real closed field of 
  transcendence degree $\aleph_1$ with unique independent pairs.
  \end{Proof}
  
  \medskip Corollary \ref{stretch} is as far as we can go using Theorem \ref {uip} to build non-Archimedean real closed fields with unique independent pairs.
  It would be interesting to show, in ZFC, that there are non-Archimedean real closed fields with unique independent pairs of cardinality $2^{\aleph_0}$. 
  
 \medskip  Suppose $T$ is the theory of an o-minimal expansion of the real field in a countable language. 
 The constructions of \cite{marker stein} and \cite{lange} work for models of $T$. Replacing ``algebraic independence"
 by the usual notion of independence for o-minimal theories, we can ask if 
 the analogs of Corollary \ref{real main} and Theorem \ref{uip} hold for models of $T$?

  \end{document}